\newtheorem{theorem}{Theorem}[section]
\newtheorem{lemma}[theorem]{Lemma}
\newtheorem{question}[theorem]{Question}
\newtheorem{proposition}[theorem]{Proposition}
\newtheorem{corollary}[theorem]{Corollary}
\newtheorem{definition}[theorem]{Definition}
\newcommand{\C}{\mathbb{C}}
\newcommand{\bth}{\begin{theorem}}
\renewcommand{\eth}{\end{theorem}}
\newcommand{\bpr}{\begin{proposition}}
\newcommand{\epr}{\end{proposition}}
\newcommand{\bde}{\begin{definition}}
\newcommand{\ede}{\end{definition}}
\newcommand{\blem}{\begin{lemma}}
\newcommand{\elem}{\end{lemma}}
\newcommand{\bco}{\begin{corollary}}
\newcommand{\eco}{\end{corollary}}
\newcommand{\prove}{\begin{proof}}
\newcommand{\done}{\end{proof}}
\newcommand{\ite}{\begin{itemize}}
\newcommand{\mize}{\end{itemize}}
\newcommand{\ben}{\begin{enumerate}}
\newcommand{\een}{\end{enumerate}}
\newcommand{\Q}{\mathbb{Q}}
\newcommand{\Z}{\mathbb{Z}}
\DeclareMathOperator{\Res}{Res}
\DeclareMathOperator{\sign}{sign}
\DeclareMathOperator{\supp}{supp}
\DeclareMathOperator{\Gal}{Gal}
\title{Cyclotomic Points and Algebraic Properties of Polygon Diagonals}
\author{Thomas Grubb\\\texttt{tgrubb@ucsd.edu}\\University of California, San Diego \and Christian Woll\\\texttt{christianwoll.1@gmail.com}\\University of California, San Diego}
\date{\today\\[10pt]
	\begin{flushleft}
	\small Key Words: Regular polygons, diagonals, roots of unity, cyclotomic extension
	                                       \\[5pt]
	\small AMS subject classification (2010):  11R18 (Primary) 51M05 (Secondary)
	\end{flushleft}}
\begin{document}

\maketitle
\abstract{By viewing the regular $N$-gon as the set of $N$th roots of unity in the complex plane we transform several questions regarding polygon diagonals into when a polynomial vanishes when evaluated at roots of unity. To study these solutions we implement algorithms in Sage as well as examine a trigonometric diophantine equation. In doing so we classify when a metallic ratio can be realized as a ratio of polygon diagonals, answering a question raised in a PBS Infinite Series broadcast. We then generalize this idea by examining the degree of the number field generated by a given ratio of polygon diagonals. }
\section{Introduction}
The study of rational solutions to (possibly several) polynomial equations has a long history and needs no introduction. Lately there has been interest in extending this line of study to other types of algebraic numbers. In particular one may ask the following. Given $k$ multivariate polynomials $f_1,\dots,f_k$ from $\mathbb{Q}[x_1,\dots,x_n]$, does there exist a point $(\zeta_1,\dots,\zeta_n)$ with both 
\begin{itemize}
    \item $\zeta_j$ a root of unity for all $j$, and
    \item $f_i(\zeta_1,\dots,\zeta_n)=0$ for all $i$?
\end{itemize}
Such a solution will be called a \emph{cyclotomic point} or \emph{cyclotomic solution} to the equations defined by the $f_i$. Lang had conjectured that such solutions can be given in terms of finitely many parametric families \cite{Lan61}. This has since been verified via work of Ihara, Serre, and Tate \cite{Lan61} and Laurent \cite{Lau84}. Recent work has been centered around developing methods for computing the parametric families of cyclotomic solutions to a given set of equations. In doing so one finds applications to properties of regular polgyons \cite{PR98} and to solving trigonometric diophantine equations \cite{CJ76}.

The easiest paradigm of this question involves finding zeroes of univariate polynomials which are roots of unity; this was carried out by Bradford and Davenport \cite{BD88}, who gave a concise algorithm for finding such factors. This has since been extended by Beukers and Smyth to the setting of algebraic plane curves \cite{BS02}, and generalized to arbitrary dimensions by Aliev and Smith \cite{AS12}. These papers will form the backbone of our results; those interested in further constructive results and explicit methods may additionally explore \cite{Ler12}, \cite{Roj07}, and \cite{Rup93}. 

Our main contribution is to apply existing algorithms to explicitly compute the cyclotomic solutions to two specific polynomials. In doing so we are able to solve two classification problems. 

The first of our main results regards metallic means, which are extensions of the well known golden ratio $\phi_1=\frac{1+\sqrt{5}}{2}$. The golden ratio is ubiquitous in combinatorics and other areas, making appearances with respect to the Fibonacci sequence, irrationality and Diophantine approximation \cite{Hur1894}, and even topological entropy \cite{DDM99}. Naturally the golden ratio has been extended to \emph{metallic means}; for an integer $n\geq 1$ define the $n$th metallic mean by 
$$
\phi_n=\frac{n+\sqrt{n^2+4}}{2}.
$$
The formula comes from the generalized Fibonacci recursion 
$$
F^{(n)}_k=nF^{(n)}_{k-1}+F^{(n)}_{k-2}
$$
with appropriate initial conditions; see \cite{GW19} or \cite{Spi99} for analysis of metallic means. 

It is well known that both the golden ratio $\phi_1$ and the \emph{silver ratio} $\phi_2$ can be expressed as a ratio of polygon diagonals:
\begin{center}
\begin{tikzpicture}
\node[draw = black, minimum size=2cm,regular polygon,regular polygon sides=5] (a) at (-5,0){};
\node[draw = black, minimum size=2cm,regular polygon,regular polygon sides=8] (b) {};
\draw[dashed] (a.corner 2) to (a.corner 5);
\draw[dashed] (b.corner 3) to (b.corner 8);
\node at (-5.6,.8) {1};
\node at (-5,0) {$\phi_1$};
\node at (0,0) {$\phi_2$};
\node at (-.8,.8) {1};

\end{tikzpicture}
\end{center}
By \emph{diagonal} we mean the distance between any two distinct vertices on a regular polygon, so side lengths are allowed. In a PBS Infinite Series broadcast, Perez-Giz raised the question of whether or not the same statement is true for $\phi_n$ for any $n\geq 3$ \cite{PBS}. This question was in fact answered negatively by Buitrago for the \emph{bronze ratio} $\phi_3$ several years previously \cite{Bui07}. The methods used were entirely numeric, and did not extend to $n\geq4$. Our first contribution is the following classification, which in fact allows an even more general definition of metallic mean.
\begin{theorem}
Let $y_0$ be an algebraic number with $y_0^2\in \mathbb{Q}$. Define $\phi_{y_0}$ as the larger root of the quadratic equation
$$
x^2-y_0x-1=0.
$$
Then $\phi_{y_0}$ can be realized as a ratio of polygon diagonals if and only if 
$$
y_0\in \{0,\pm 1,\pm\tfrac{3}{2},\pm2,\pm\sqrt{2},\pm\sqrt{5},\pm\sqrt{12},\pm\tfrac{\sqrt{2}}{2},\pm\tfrac{\sqrt{6}}{6},\pm\tfrac{2\sqrt{3}}{3},\pm\tfrac{\sqrt{12}}{12}\}.
$$
\end{theorem}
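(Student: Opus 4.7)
The plan is to rephrase ``$\phi_{y_0}$ is a ratio of polygon diagonals'' as a cyclotomic-point problem on an algebraic plane curve and then apply the Beukers--Smyth algorithm of \cite{BS02}. Inscribing a regular $N$-gon in the unit circle gives diagonal lengths $2\sin(\pi a/N)$ for $1 \le a \le N-1$, so the hypothesis becomes $\phi_{y_0} = \sin(\pi a/N)/\sin(\pi b/N)$ for some integers $a, b, N$. Substituting into $\phi_{y_0}^2 - y_0\phi_{y_0} - 1 = 0$, clearing the denominator, solving the (linear in $y_0$) equation for $y_0$, and squaring to exploit $r := y_0^2 \in \Q$ gives the trigonometric Diophantine equation
\[
 \bigl(\sin^2(\pi a/N) - \sin^2(\pi b/N)\bigr)^2 \;=\; r\,\sin^2(\pi a/N)\sin^2(\pi b/N).
\]
Using $2\sin^2\theta = 1 - \cos(2\theta)$ and setting $\zeta = e^{2\pi i a/N}$, $\eta = e^{2\pi i b/N}$, clearing denominators turns this into the polynomial identity
\[
 F_r(\zeta,\eta) \;:=\; (\eta-\zeta)^2(\zeta\eta-1)^2 \;-\; r\,\zeta\eta(\zeta-1)^2(\eta-1)^2 \;=\; 0,
\]
to be satisfied by a cyclotomic pair $(\zeta,\eta)$.

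Because $F_r$ is linear in $r$, each cyclotomic pair $(\zeta,\eta)$ determines $r$ as the rational function
\[
 r(\zeta,\eta) \;=\; \frac{(\eta-\zeta)^2(\zeta\eta-1)^2}{\zeta\eta(\zeta-1)^2(\eta-1)^2},
\]
and the task becomes: classify cyclotomic pairs at which $r(\zeta,\eta)$ lies in $\Q$. I would convert this into a polynomial condition via Galois invariance. A direct calculation shows $r(\zeta^{-1},\eta^{-1}) = r(\zeta,\eta)$, so complex conjugation is automatic; the other Galois elements $\sigma_k$ acting as $\zeta \mapsto \zeta^k$, $\eta \mapsto \eta^k$ impose the nontrivial constraints $r(\zeta^k,\eta^k) = r(\zeta,\eta)$. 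Clearing denominators in each such identity produces a polynomial $Q_k(\zeta,\eta) \in \Q[\zeta,\eta]$, and Beukers--Smyth applied to a well-chosen $Q_k$ (or to the common cyclotomic zero set of two or three such) returns finitely many parametric families of cyclotomic pairs. Substituting each family back into $r(\zeta,\eta)$ then yields the finite list of admissible rational values of $y_0^2$.

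The main obstacle I anticipate is controlling the reduction to finitely many polynomial constraints: a priori the rationality of $r(\zeta,\eta)$ is a system indexed by the entire Galois group, and one must either give a structural argument that one or two carefully selected $k$'s already force rationality, or include a verification step that discards Beukers--Smyth outputs on which $r$ fails to be rational. Once the finite list of candidates has been produced, matching it against the claimed set $\{0, \pm 1, \pm\tfrac{3}{2}, \pm 2, \pm\sqrt{2}, \ldots, \pm\tfrac{\sqrt{12}}{12}\}$ is a routine (if tedious) computer-algebra verification, in line with the Sage implementation indicated in the abstract.
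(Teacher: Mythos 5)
Your reduction is essentially the paper's. Expanding $|1-\zeta_N^a|^2 = 2 - \zeta_N^a - \zeta_N^{-a}$ and clearing denominators, the paper arrives (after factoring) at exactly your polynomial
$$
(x-y)^2(xy-1)^2 - r\,xy\,(x-1)^2(y-1)^2 = 0,
$$
where $r=y_0^2$ is a rational parameter and $(x,y)$ are to be roots of unity; so your $F_r(\zeta,\eta)$ \emph{is} the paper's trivariate polynomial $f(n,x,y)$, and your rational-function formulation $r = A/B$ with the Galois-invariance constraint $r(\zeta^k,\eta^k)=r(\zeta,\eta)$ is the same, after clearing denominators, as taking the resultant in the rational parameter.

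The ``obstacle'' you flag is real but already has a clean structural fix, which is the heart of \cite{BS02} and of the paper's Section 2. BS02's Lemma~1 states that any root of unity $\omega$ is $\Q$-conjugate to exactly one of $-\omega$, $\omega^2$, $-\omega^2$. Applying this to a common generator $\xi$ with $\omega=\xi^a$, $\tau=\xi^b$ ($\gcd(a,b)=1$) and tracking parities of $a,b$ shows (paper's Lemma~2.4) that if $f(r_0,\omega,\tau)=0$ with $r_0\in\Q$, then one of seven specific pullbacks $f(r,\pm x,\pm y)$, $f(r,\pm x^2,\pm y^2)$ vanishes at the same point. These seven \emph{are} the finitely many constraints: taking the resultant in $r$ of $F_r$ against each of its seven pullbacks gives seven bivariate polynomials, and every cyclotomic point you seek is a cyclotomic point on at least one of these curves, which is then amenable to BS02. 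You do not need a verification step to discard outputs where $r\notin\Q$; once you solve for a cyclotomic pair on one of the seven resultant curves, $r$ is determined as $A(\zeta,\eta)/B(\zeta,\eta)$ and you simply check whether it is rational.

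One small technical point you should confirm before declaring the lists match: your formulation via $\sin(\pi a/N)/\sin(\pi b/N)$ and $\zeta=e^{2\pi i a/N}$ requires $\zeta,\eta\neq 1$ (excluded since denominators vanish), and it yields both $y_0=\pm\sqrt{r}$ for each solution, corresponding to exchanging numerator and denominator. The paper exploits this at the end of its proof, reading off $\phi_{\pm\sqrt a}$ from each solution and then tabulating explicit realizations; your ``routine computer-algebra verification'' is the same step and should also include exhibiting realizations to establish the ``if'' direction of the classification.
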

Explicit constructions are given in Section 3, and the proof framework is simple to describe. By viewing the regular $N$-gon as the set of $N$th roots of unity in the complex plane, one can translate the question of Theorem 1.1 into writing $\phi_{y_0}$ as a ratio 
$$
\phi_{y_0}=\frac{|1-\zeta_N^a|}{|1-\zeta_N^b|}
$$
for some primitive root of unity $\zeta_N$. With simple algebraic manipulations one can translate this into finding a polynomial $f(y,z_1,z_2)$ such that equation (1) holds if and only if $f(y_0,\zeta_N^a,\zeta_N^b)=0$. To finish, we use elimination theory and the algorithmic tools for finding cyclotomic points on curves mentioned above to classify when solutions of this form exist. 

After proving Theorem 1.1 we will discuss a generalization of the realization of metallic means as ratios of polygon diagonals. For a given $N$th root of unity $\zeta_N$, let $\Q[\zeta_N]^+$ denote the maximal totally real subfield of the extension $\Q[\zeta_N]/\Q$; explicitly, $\Q[\zeta_N]^+=\Q[\zeta_N+\zeta_N^{-1}]$. Given diagonals of a regular polygon $d_1 = |1-\zeta_N^a|$ and $d_2 = |1-\zeta_{N}^b|$, a simple manipulation shows that the $d_i$ lie inside the real cyclotomic field $\Q[\zeta_{4N}]^+$. Should a ratio $\tfrac{d_1}{d_2}$ equal a metallic mean, then $\tfrac{d_1}{d_2}$ will generate at most a degree four subextension of $\Q[d_1,d_2]/\Q$. Thus it is natural to ask more generally, can one classify all pairs of diagonals $d_1, d_2$ for which $\tfrac{d_1}{d_2}$ generates at most a degree $C$ extension of $\Q$, for some constant $C>0$? Towards this end, we offer the following.

\begin{theorem}
Let $d_1 = |1-\zeta_N^a|$ and $d_2=|1-\zeta_N^b|$ be diagonals of a regular $N$-gon, with $\gcd(a,b)=1$. Letting $\phi$ denote Euler's totient function, we have:
\begin{itemize}
\item If $N$ is odd, then $[\Q[\tfrac{d_1}{d_2}]:\Q] = \frac{\phi(4N)}{4}$.
\item If $N$ is even, then $[\Q[\tfrac{d_1}{d_2}]:\Q]\geq \frac{\phi(4N)}{10}$.
\end{itemize}
\end{theorem}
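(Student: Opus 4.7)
The plan is to locate $d_1/d_2$ inside a cyclotomic field and compute (or bound) the size of its Galois orbit over $\Q$. Using $|1-\z{N}^c|=2|\sin(\pi c/N)|$ and the identity $2i\sin(\pi c/N)=\z{2N}^c-\z{2N}^{-c}$, one obtains
\[
\frac{d_1}{d_2}=\frac{\z{2N}^a-\z{2N}^{-a}}{\z{2N}^b-\z{2N}^{-b}}\in\Q[\z{2N}]
\]
(after reducing $a,b$ modulo $N$ so that both sines are positive). This ratio is real and hence lies in the maximal real subfield $\Q[\z{2N}]^+$. Since $\phi(4N)=2\phi(2N)$ for every $N\ge 1$, the field $\Q[\z{2N}]^+$ has degree $\phi(2N)/2=\phi(4N)/4$ over $\Q$, so $[\Q[d_1/d_2]:\Q]$ divides $\phi(4N)/4$. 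Both parts of the theorem thus reduce to analyzing the stabilizer of $d_1/d_2$ inside $G=\Gal(\Q[\z{2N}]/\Q)\cong(\Z/2N\Z)^\times$.

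The automorphism $\tau_k:\z{2N}\mapsto\z{2N}^k$ fixes $d_1/d_2$ exactly when
\[
(\z{2N}^{ka}-\z{2N}^{-ka})(\z{2N}^{b}-\z{2N}^{-b})=(\z{2N}^{a}-\z{2N}^{-a})(\z{2N}^{kb}-\z{2N}^{-kb}),
\]
which, after expansion, is a vanishing integer combination of eight $2N$-th roots of unity with coefficients in $\{\pm 1\}$. Two mechanisms can produce such a vanishing: (i) the multiset of positive-coefficient exponents equals the multiset of negative-coefficient exponents modulo $2N$, or (ii) an exotic cancellation occurs that arises from a short vanishing sum like $1+\z{p}+\cdots+\z{p}^{p-1}=0$.

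For (i) I would match $\z{2N}^{ka+b}$ against each of the four right-hand terms in turn. Each possibility yields a congruence of the form $(k\mp 1)(a\pm b)\equiv 0\pmod{2N}$; combining the two congruences forced by a consistent pairing, using $\gcd(a,b)=1$, and discarding the degenerate cases where $d_1$ or $d_2$ vanishes, forces $k\equiv\pm 1\pmod{2N}$. For (ii) I would appeal to the Conway--Jones classification of short vanishing sums of roots of unity: when $N$ is odd only odd primes $p$ can support such a relation, and a sign/support analysis on the eight-term sum rules them out. Together these give stabilizer exactly $\{\pm 1\}$ and the exact value $[\Q[d_1/d_2]:\Q]=\phi(4N)/4$ claimed for odd $N$.

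For even $N$ the prime $2$ enters mechanism (ii) and can produce genuine additional stabilizing elements $k$. The hardest step of the proof is bounding the size of this enlarged stabilizer: the asserted inequality $[\Q[d_1/d_2]:\Q]\ge\phi(4N)/10$ corresponds to the stabilizer having index at least $\phi(4N)/10$ in the quotient of $G$ acting faithfully on $\Q[\z{2N}]^+$, which I would establish through a finite case analysis of how powers of $\z{2^m}$ can enter the eight-term relation. The constant $10$ reflects the worst-case contribution of these $2$-power coincidences to the stabilizer.
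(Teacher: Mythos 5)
Your framing matches the paper's: realize $d_1/d_2$ inside a real cyclotomic field (you use $\Q[\zeta_{2N}]^+$, the paper uses $\Q[\zeta_{4N}]^+$ via $d_i=\zeta_{4N}^{N-2\cdot}+\zeta_{4N}^{3N+2\cdot}$; your degree count $[\Q[\zeta_{2N}]^+:\Q]=\phi(2N)/2=\phi(4N)/4$ is correct, so the two ambient fields lead to the same bookkeeping), then bound the stabilizer of $d_1/d_2$ by analyzing an eight-term vanishing sum of roots of unity and invoking the W\l odarski and Conway--Jones classification. That is the right idea, and it is essentially what the paper does via Lemmas 4.1--4.3 and Proposition 4.4.

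However, there are two concrete gaps. First, your claim that mechanism (i) forces $k\equiv\pm 1\pmod{2N}$ is false for even $N$: take $N$ even, $a,b$ both odd, and $k=N+1$; then $\tau_k(\zeta_{2N}^a-\zeta_{2N}^{-a})=(-1)^a(\zeta_{2N}^a-\zeta_{2N}^{-a})$ and likewise for $b$, so $\tau_k$ fixes $d_1/d_2$, and one checks directly that the positive and negative exponent multisets in the eight-term relation coincide. Thus the extra stabilizing elements for even $N$ already arise from your ``trivial'' cancellation, not solely from mechanism (ii) as you suggest. Second, and more seriously, the steps you leave as ``a sign/support analysis rules them out'' (odd $N$) and ``a finite case analysis of how powers of $\zeta_{2^m}$ can enter'' (even $N$) are precisely where the content of the theorem lives. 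In the paper, this is handled by Lemma 4.2 (a complete list of cyclotomic solutions to the eight-term equation, built from Conway--Jones Theorem 6 and W\l odarski Theorem 1), Lemma 4.3, and Proposition 4.4, which convert the classification into the pair of congruences $k(N-2a)\equiv 3N-2a$ and $k(N-2b)\equiv 3N-2b \pmod{4N}$; reducing mod $N$ gives $2(k-1)a\equiv 2(k-1)b\equiv 0\pmod N$, so $\gcd(a,b)=1$ forces $2(k-1)\equiv 0\pmod N$, and counting the lifts of its one (odd $N$) or two (even $N$) solution classes mod $N$ up to $4N$ produces the stabilizer bounds $2$ and $5$, hence the constants $4$ and $10$. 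Without an analogous quantitative derivation, the constant $10$ in your even-$N$ inequality is asserted rather than proved, and the exact value for odd $N$ is not established.
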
 
A direct consequence of this is that, for any $C>0$, there are only finitely many ratios of diagonals which generate an extension of degree at most $C$. Moreover, these ratios can be listed explicitly using simple bounds on the totient function $\phi$. As an example of this, we will use Theorem 1.2 to give an alternate proof of Theorem 1.1 at the end of Section 4.

Theorem 1.2 will be proven in Section 4 as a consequence to Proposition 4.4, in which we analyze the index of $\Q[\tfrac{d_1}{d_2}]$ inside $\Q[\zeta_{4N}]^+$. This analysis is similar to the proof of Theorem 1.1 in that it reduces to solving equations in roots of unity. However, due to an increased number of parameters, it is infeasible to use the same computational proof. We get around this by reducing our original equation to a trigonometric diophantine equation, to which we can apply results of W\l odarski \cite{Wlo69} and Conway and Jones \cite{CJ76}.

The rest of this paper is outlined as follows. In Section 2 we give a brief review of the algorithmic tools developed in \cite{AS12}, \cite{BD88}, and \cite{BS02}. The theoretical background will allow us to implement an algorithm in Sage \cite{sagemath} in order to solve Theorem 1.1; the discussion of this will take place in Section 3. In Section 4 we discuss briefly the computational problems for proving Theorem 1.2 in the same manner as Theorem 1.1. We are able to work around this using work of W{\l}odarski and of Conway and Jones on diophantine trigonometric equations; thus we are able to recover Theorem 1.2 with little computing at all in the end of Section 4.

\textbf{Acknowledgements:} We would like to thank Kiran Kedlaya for his patience and many helpful comments throughout this project. The first listed author would like to gratefully acknowledge support from NSF Research Training Group grant DMS-1502651 and from NSF grant DMS-1849173. 
\section{Tools for Finding Cyclotomic Points}
In this section we will discuss algorithms for finding cyclotomic points on curves or, more generally, families of curves. We will start in the simplest case and build up the theory from there. We emphasize that much of the theory discussed here may be found in \cite{BS02}; the only minor novelty that we require is to allow for a rational parameter instead of solely looking for cyclotomic solutions.
\subsection{Preliminary Definitions and Lemmas}
Given a univariate polynomial $f(x)\in \mathbb{C}[x]$ we define the (squarefree) \emph{cyclotomic part} of $f$ to be 
$$
Cf(x)=\prod_{f(\zeta)=0}(x-\zeta)
$$
where the product runs over all roots of unity on which $f$ vanishes. 

We will need the following lemma of Beukers and Smyth which characterizes roots of unity and polynomials vanishing on roots of unity. 
\begin{lemma}[BS02, Lemma 1]
If $g(x)\in \C[x]$ with $g(0)\neq 0$ is a polynomial with the property that for every zero $\alpha$ of $g$, at least one of $\pm \alpha^2$ is a zero of $g$, then all zeroes of $g$ are roots of unity. Furthermore, if $\omega$ is a root of unity, then it is conjugate (over $\Q$) to exactly one of $-\omega$, $\omega^2$, or $-\omega^2$.
\end{lemma}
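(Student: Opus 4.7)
The plan is to establish the two assertions independently. For the first, the idea is to iterate the hypothesis on an arbitrary root $\alpha$ of $g$ via the operation $\alpha\mapsto \pm\alpha^2$, producing an infinite sequence of roots trapped in the finite zero set of $g$. Pigeonhole then forces a relation of the form $\alpha^M = \pm 1$, so $\alpha$ is a root of unity. For the second, I would appeal to the standard Galois theory of cyclotomic fields and carry out a short case analysis on the order of $\omega$.

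Concretely, for part one, fix a root $\alpha$ of $g$; note $\alpha\neq 0$ since $g(0)\neq 0$. Inductively define $\alpha_0=\alpha$ and $\alpha_{i+1}=\varepsilon_{i+1}\alpha_i^2$, where $\varepsilon_{i+1}\in\{\pm 1\}$ is chosen (by hypothesis) so that $\alpha_{i+1}$ is again a root of $g$. A routine induction shows $\alpha_i=\eta_i\alpha^{2^i}$ for some sign $\eta_i\in\{\pm 1\}$. Since $g$ has only finitely many roots, there must exist $i<j$ with $\alpha_i=\alpha_j$, which yields $\alpha^{2^j-2^i}=\eta_i/\eta_j=\pm 1$ and hence $\alpha^{2(2^j-2^i)}=1$, so $\alpha$ is a root of unity.

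For part two, let $n$ denote the order of $\omega$. Two primitive $n$th roots of unity share the same minimal polynomial over $\Q$, namely the $n$th cyclotomic polynomial, so the $\Q$-conjugates of $\omega$ are precisely the other primitive $n$th roots of unity. It therefore suffices to show that exactly one of $-\omega,\omega^2,-\omega^2$ still has order $n$. I would split on $n \bmod 4$: when $n$ is odd, $\omega^2$ retains order $n$ while $-\omega$ and $-\omega^2$ both pass to order $2n$; when $n\equiv 2\pmod 4$, only $-\omega^2$ has order $n$, since $\omega^2$ and $-\omega$ both drop to order $n/2$; and when $n\equiv 0\pmod 4$, only $-\omega$ has order $n$, because $\omega^2$ has order $n/2$ and the equation $-\omega=\omega^k$ would force $\omega^k=-1=\omega^{n/2}$ with $n/2$ even, so $k$ must be even, a contradiction. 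The small cases $n=1,2,4$ can be verified by hand and conform to the same pattern.

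Neither step is conceptually deep; the main obstacle I anticipate is the bookkeeping in part two, where the orders of $-\omega$ and $-\omega^2$ depend sensitively on the $2$-adic valuation of $n$. Part one is a clean pigeonhole on the finite zero set of $g$, while part two reduces to a cyclotomic-order count once the right trichotomy is fixed.
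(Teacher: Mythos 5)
The paper cites this lemma from Beukers and Smyth without reproving it, so there is no in-paper proof to compare against; your argument follows the same standard route as BS02: pigeonhole on the orbit of $\alpha$ under $\alpha\mapsto\pm\alpha^2$ for part one, and a trichotomy on $n\bmod 4$ for part two, reducing the conjugacy claim to checking which of $-\omega$, $\omega^2$, $-\omega^2$ is again a primitive $n$th root of unity. Part one and the first two cases of part two are correct as written.

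However, your handling of the case $n\equiv 0\pmod 4$ is garbled: the clause ``the equation $-\omega=\omega^k$ would force $\omega^k=-1$'' is a non-sequitur, since $-\omega=\omega^k$ gives $\omega^{k-1}=-1$, not $\omega^k=-1$, and the stated ``contradiction'' from $k$ being even is never connected to an assumption. You also never rule out $-\omega^2$ in this case, which is needed for the word ``exactly.'' The repair is short: since $-1=\omega^{n/2}$, write $-\omega=\omega^{n/2+1}$ and $-\omega^2=\omega^{n/2+2}$. With $n/2$ even, $n/2+1$ is odd, and any common divisor of $n$ and $n/2+1$ divides $2(n/2+1)-n=2$, hence equals $1$; so $-\omega$ has order $n$. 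Meanwhile $n/2+2$ is even, so $\gcd(n,n/2+2)\geq 2$ and $-\omega^2$ has order strictly less than $n$, as does $\omega^2$. With that sentence fixed, the proof is correct and follows the standard approach.
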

From this two important lemmas follow. 
\begin{lemma}
Suppose $g(n,x)\in \Z[n,x]$ vanishes at $(n_0,\omega)$ with $n_0\in \Q$ and $\omega$ a root of unity. Then one of 
$$
g_1(n,x):=g(n,-x),\;\;g_2(n,x):=g(n,x^2),\;\;g_3(n,x):=g(n,-x^2)
$$
also vanishes at $(n_0,\omega)$.\qed
\end{lemma}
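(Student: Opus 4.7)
The plan is to reduce this bivariate statement to a univariate one by specializing the parameter $n$, and then to invoke the final clause of Lemma 2.2 together with the basic Galois-theoretic fact that a polynomial in $\Q[x]$ has all its $\Q$-conjugate roots.

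First I would set $h(x) := g(n_0, x)$. Because $g\in\Z[n,x]$ and $n_0\in\Q$, this is a polynomial in $\Q[x]$, and by hypothesis $h(\omega)=0$. If $h$ is the zero polynomial then every $g_i(n_0,\omega)$ is trivially zero, so I may assume $h\not\equiv 0$. Then the minimal polynomial of $\omega$ over $\Q$ divides $h$, so every Galois conjugate of $\omega$ over $\Q$ is also a root of $h$.

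Next I would apply the second assertion of Lemma 2.2: since $\omega$ is a root of unity, exactly one of $-\omega$, $\omega^2$, $-\omega^2$ is a Galois conjugate of $\omega$ over $\Q$. Call that element $\omega^\star$. By the previous step $h(\omega^\star)=0$, i.e.\ $g(n_0,\omega^\star)=0$. Unwinding the definitions, this says that the corresponding $g_i(n_0,\omega)$ equals zero for the appropriate $i\in\{1,2,3\}$, which is exactly the desired conclusion.

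There is no real obstacle in this argument; the work has already been done in Lemma 2.2. The only point worth flagging is that the argument relies on $n_0$ being rational (so that specialization lands in $\Q[x]$ and Galois closure applies); the statement would fail for arbitrary algebraic $n_0$ because then the conjugates of $\omega$ in $\Q(n_0)[x]$ form a smaller set and the Lemma 2.2 dichotomy no longer forces one of $\pm\omega^2,-\omega$ into the root set of $h$.
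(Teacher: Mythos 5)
Your proof is correct and is exactly the argument the paper leaves implicit: the paper states this lemma with a \qed and no proof, treating it as an immediate consequence of the Beukers--Smyth result (which the paper numbers Lemma~2.1, not 2.2 as in your citation). Specializing to $h(x)=g(n_0,x)\in\Q[x]$, observing that the $\Q$-minimal polynomial of $\omega$ divides $h$ so that its distinguished $\Q$-conjugate among $-\omega,\omega^2,-\omega^2$ is also a root, and unwinding which $g_i$ this corresponds to, is the intended reasoning; your closing remark about why rationality of $n_0$ is essential is a nice bonus.
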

\begin{lemma}
Suppose $g(n,x)\in \Z[x,n]$ is an irreducible polynomial which is nonconstant in both $n$ and $x$. Then $g(n,x)\nmid g(n,\pm x^2)$.
\end{lemma}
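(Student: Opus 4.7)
The plan is to assume for contradiction that $g(n,x)\mid g(n,\epsilon x^2)$ for some $\epsilon\in\{\pm 1\}$, and then derive that $g$ must be a cyclotomic polynomial independent of $n$, contradicting the hypothesis. Since $g$ is irreducible in $\Z[n,x]$ and nonconstant in $x$, it is primitive as a polynomial in $x$ over $\Z[n]$ (otherwise its content would provide a nontrivial factor), hence by Gauss's lemma $g$ remains irreducible in $\Q(n)[x]$.

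Pick a root $\alpha\in\overline{\Q(n)}$ of $g(n,x)$. The assumed divisibility forces $\epsilon\alpha^2$ to be another root, and iterating the map $\beta\mapsto\epsilon\beta^2$ produces a sequence of roots $\alpha,\epsilon\alpha^2,\epsilon\alpha^4,\epsilon\alpha^8,\ldots$ of $g(n,x)$. Since $g$ has only $\deg_x g$ roots, two iterates must coincide; a short manipulation of the resulting equation (using $\alpha\neq 0$, which follows because otherwise $x\mid g$ and then $g=\pm x$ by irreducibility, contradicting nonconstancy in $n$) yields $\alpha^N=\pm 1$ for some $N>0$. Hence $\alpha$ is a root of unity, and in particular $\alpha\in\overline{\Q}$.

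The minimal polynomial of $\alpha$ over $\Q$ is a cyclotomic polynomial $\Phi_m(x)\in\Z[x]$, and it remains irreducible over $\Q(n)$ because any factorization of a polynomial in $\Q[x]$ inside $\Q[n,x]$ must have both factors lying in $\Q[x]$ (compare degrees in $n$), and then Gauss's lemma rules out factorizations in $\Q(n)[x]$ as well. Since $g$ is also the minimal polynomial of $\alpha$ over $\Q(n)$ up to scalar, one obtains $g(n,x)=c(n)\Phi_m(x)$ with $c(n)\in\Q(n)^\times$. Comparing leading coefficients in $x$ places $c(n)$ in $\Z[n]$, and irreducibility of $g$ in $\Z[n,x]$ then forces $c(n)=\pm 1$, so $g\in\Z[x]$, contradicting nonconstancy in $n$.

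The main obstacle is the iteration step verifying that $\alpha$ must be a root of unity: the sign $\epsilon$ introduces a few coincidence cases to handle when two iterates agree, but each reduces to $\alpha^N=\pm 1$. Once $\alpha$ is known to be a root of unity, the remainder is a clean chain of minimal-polynomial and Gauss's-lemma manipulations.
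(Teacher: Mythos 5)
Your proof is correct, but it takes a genuinely different route from the paper's. The paper specializes $n$ to (infinitely many) integers $n_0$, applies Lemma 2.1 to conclude that each $g(n_0,x)$ has all roots on the unit circle, invokes Kronecker's theorem that there are only finitely many monic integer polynomials of bounded degree with this property, and uses the pigeonhole principle to find a single Kronecker polynomial $K(x)$ matching infinitely many specializations, forcing the factorization $g(n,x)=C(n)K(x)$. You instead work \emph{generically}: pass to $\Q(n)$ (justified via primitivity and Gauss's lemma), pick a root $\alpha\in\overline{\Q(n)}$, and iterate $\beta\mapsto\epsilon\beta^2$ to show $\alpha$ satisfies $\alpha^N=\pm1$, hence is a root of unity and in particular lies in $\overline{\Q}$; then $g$ must be a $\Q(n)^\times$-multiple of a cyclotomic polynomial $\Phi_m(x)$, and the leading-coefficient and primitivity constraints force $g=\pm\Phi_m$, contradicting nonconstancy in $n$. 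In effect you are rerunning the iteration argument underlying Lemma 2.1 over the function field rather than over $\C$ after specialization. Your approach avoids both Kronecker's finiteness theorem and the pigeonhole step, and is arguably cleaner; the paper's approach stays at the level of specialized polynomials in $\C[x]$ and so reuses Lemma 2.1 verbatim. Both are valid, and the small verifications you flag (the $\alpha\neq 0$ case, the sign bookkeeping giving $\alpha^{2^j-2^i}=\epsilon^{2^i-2^j}=\pm1$, and irreducibility of $\Phi_m$ over $\Q(n)$) all go through as you indicate.
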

\begin{proof}
Suppose $g(n,x)$ divides $g(n,\pm x^2)$. For any fixed $n_0$, Lemma 2.1 thus implies that the roots of $g(n_0,x)$ are all roots of unity. It is known that there are only finitely many monic polynomials of a given degree whose zeroes are all on the unit circle; these are the so called \emph{Kronecker polynomials} [Dam]. Since for any $n_0$ $g(n_0,x)$ is a scalar multiple of a Kronecker polynomial we may conclude by the pigeonhole principle that there is a Kronecker polynomial $K(x)$ and infinitely many distinct constants $n_k$ for which $g(n_k,x)=c_k K(x)$ for some constants $c_k$. 

We finish by writing 
$$
g(n,x)=\sum_{j=0}^NC_j(n)x^j
$$
for some coefficient polynomials $C_j(n)$. The previous paragraph shows that the $C_j(n_k)=C_{j'}(n_k)$ for all $k$, and hence the coefficient polynomials are all identical. This gives a factorization $g(n,x)=C_N(n)K(x)$; the assumption that $g(n,x)$ is nonconstant in both $n$ and $x$ guarantees that $C_N(n)$ and $K(x)$ are nonconstant and hence we have a proper factorization of $g(n,x)$, contradicting our hypothesis.
\end{proof}
Finally we give a trivariate analog of Lemma 2.2.
\begin{lemma}
Suppose $f(n,x,y)\in \Z[n,x,y]$ vanishes at $(n_0,\omega,\tau)$ with $n_0\in \Q$ and $\omega,\tau$ roots of unity. Then one of 
\begin{itemize}
    \item $f_1(n,x,y):=f(n,-x,y)$,\\ 
    \item $f_2(n,x,y):=f(n,x,-y)$,\\ 
    \item $f_3(n,x,y):=f(n,-x,-y)$,\\ 
    \item $f_4(n,x,y):=f(n,x^2,y^2)$,\\ 
    \item $f_5(n,x,y):=f(n,-x^2,y^2)$,\\ 
    \item $f_6(n,x,y):=f(n,x^2,-y^2)$,\\ 
    \item $f_7(n,x,y):=f(n,-x^2,-y^2)$\\ 
\end{itemize}
vanishes at $(n_0,\omega,\tau)$ as well.
\end{lemma}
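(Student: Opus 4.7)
The plan is to reduce the lemma to the two-variable generalization of the Galois-theoretic argument behind Lemma 2.2: I will exhibit a single Galois automorphism of $\Q(\omega, \tau)/\Q$ whose action on the pair $(\omega, \tau)$ lands in the list of seven destination pairs. The conclusion then follows immediately, since $f$ has integer coefficients and $n_0 \in \Q$ is fixed by every such automorphism, so applying it to the hypothesis $f(n_0, \omega, \tau) = 0$ yields $f(n_0, \omega^k, \tau^k) = 0$ for the appropriate exponent $k$.

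Setting $N = \mathrm{lcm}(\mathrm{ord}(\omega), \mathrm{ord}(\tau))$, both $\omega$ and $\tau$ lie in $\Q(\zeta_N)$, and $\Gal(\Q(\zeta_N)/\Q) \cong (\Z/N\Z)^\times$ acts uniformly via $\sigma_k \colon \zeta_N \mapsto \zeta_N^k$, simultaneously sending $(\omega, \tau)$ to $(\omega^k, \tau^k)$. The task is therefore reduced to producing a single exponent $k \in (\Z/N\Z)^\times$ such that $(\omega^k, \tau^k)$ is one of the seven listed pairs.

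I would carry this out by splitting into three cases on the $2$-adic valuation of $N$. If $N$ is odd, take $k = 2$: it is coprime to $N$ and the image $(\omega^2, \tau^2)$ is the $f_4$ pair. If $N = 2M$ with $M$ odd, take $k = M+2$, which is coprime to $2M$ since $k$ is odd and $\gcd(k, M) = \gcd(2, M) = 1$; because $\omega^M$ and $\tau^M$ are square roots of $\omega^N = \tau^N = 1$, they lie in $\{\pm 1\}$, and the image $(\omega^M \omega^2, \tau^M \tau^2)$ matches one of $f_4, f_5, f_6, f_7$. If $4 \mid N$, take $k = 1 + N/2$: it is odd and $\gcd(k, N)$ divides $2k - N = 2$, forcing coprimality, and the image is $(\omega^{N/2}\omega, \tau^{N/2}\tau)$ with each sign factor in $\{\pm 1\}$.

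The main obstacle is the last case: I must exclude the possibility $\omega^{N/2} = \tau^{N/2} = 1$, since then the image would equal $(\omega, \tau)$ itself, which is not among the seven destinations. This is resolved by the minimality of $N$: $\omega^{N/2} = 1$ is equivalent to $\mathrm{ord}(\omega) \mid N/2$, so if both signs were $+1$ then $\mathrm{lcm}(\mathrm{ord}(\omega), \mathrm{ord}(\tau))$ would divide $N/2$, contradicting $N = \mathrm{lcm}(\mathrm{ord}(\omega), \mathrm{ord}(\tau))$. Hence at least one sign flips, and the image lands in $\{f_1, f_2, f_3\}$. This minimality observation is the two-variable shadow of Lemma 2.1's classification of which of $-\omega, \omega^2, -\omega^2$ is a conjugate of $\omega$.
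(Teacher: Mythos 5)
Your proof is correct, and it takes a genuinely different route from the paper's. The paper proves Lemma 2.4 by reducing to the univariate Lemma 2.2: it chooses a single root of unity $\zeta$ with $\omega = \zeta^a$, $\tau = \zeta^b$ for \emph{coprime} $a, b$, sets $g(n,z) := f(n,z^a,z^b)$, and applies Lemma 2.2 to $g$. The coprimality of $a$ and $b$ guarantees that at least one of them is odd, so $g(n,-z)$ and $g(n,-z^2)$ specialize to $f_i$'s that genuinely flip at least one sign; thus the parity bookkeeping you do by hand is absorbed into the choice of parametrization, and the Galois theory is inherited wholesale from Lemma 2.1. By contrast, your argument works directly with $N = \mathrm{lcm}(\mathrm{ord}\,\omega, \mathrm{ord}\,\tau)$ and explicitly constructs an exponent $k$ coprime to $N$, split according to $\nu_2(N)$, with the minimality of $N$ ruling out the degenerate case $\omega^{N/2} = \tau^{N/2} = 1$ when $4 \mid N$. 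The trade-off: the paper's reduction is slicker and reuses its machinery, but it silently relies on the (true but nontrivial) fact that a coprime-exponent representation $\omega = \zeta^a$, $\tau = \zeta^b$ always exists; your version is more self-contained and makes the required automorphism completely explicit, at the cost of a three-way case split that essentially re-derives the content of Lemma 2.1 in the two-variable setting. One small stylistic note: in the middle case ($N = 2M$, $M$ odd), you do not actually need to verify that $\sigma_k$ is a nontrivial automorphism or that $(\omega^k,\tau^k) \neq (\omega,\tau)$ --- since the targets $f_4,\dots,f_7$ include the no-sign-flip option $f_4 = f(n,x^2,y^2)$, the conclusion $f_4(n_0,\omega,\tau)=0$ holds even when $\omega^2 = \omega$; the only place the ``at least one sign flips'' argument is truly indispensable is the $4 \mid N$ case, and you correctly isolated that as the crux.
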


\begin{proof}
Pick a root of unity $\zeta$ for which $\omega=\zeta^a$ and $\tau=\zeta^b$ for coprime integers $a$ and $b$. Define $g(n,z):=f(n,z^a,z^b)$, so that $g(n_0,\zeta)=0$. By taking into account the parity of $a$ and $b$, we see that 
\begin{align*}
    g(n,-z)&\in \{f_1(n,x,y),f_2(n,x,y),f_3(n,x,y)\}\\
    g(n,z^2)&=f_4(n,x,y)\\
    g(n,-z^2)&\in \{f_5(n,x,y),f_6(n,x,y),f_7(n,x,y)\}
\end{align*}
and hence by Lemma 2.2 we are done.
\end{proof}
The last prerequisite we will need is a method to find the cyclotomic part of a polynomial $f\in \mathbb{C}[x]$. We will treat the existence of such an algorithm as a black box; for details one may read \cite{BD88} or Section 2 of \cite{BS02}.
\subsection{Finding cyclotomic points for families of polynomials}
We now describe how to find cyclotomic solutions to families of polynomials indexed by a rational parameter. That is to say, given $f(n,x)\in \Q[n,x]$ we find pairs $(n_0,\omega)$ with $n_0\in \Q$ and $\omega$ a root of unity for which $f(n_0,\omega)=0$. 

We start by recalling the \emph{resultant} of two polynomials. The resultant is a classical tool in elimination theory, and we will refer the reader to Chapter 3, Section 6 of [CLO] as opposed to going into details here. For the purposes of this paper one may take the following crude and incomplete description: given two polynomials $f(n,x),g(n,x)\in \Q[n,x]$, there exists a polynomial $\Res(f,g,x)\in \Q[n]$, read ``the resultant of $f$ and $g$ with respect to $x$,'' with the property that if $(n_0,x_0)$ is a common zero of $f$ and $g$, then $n_0$ is a zero of $\Res(f,g,x)$.
Importantly, the resultant of two polynomials may be calculated efficiently in any modern computer algebra system. Moreover, the use of resultants lowers the number of variables in question at each step, making it ideal for the classification problems we are interested in.

We now describe an algorithm based on \cite{BD88} and \cite{BS02} for finding cyclotomic points on families of polynomials.  We will maintain the notation of Lemma 2.2, i.e. if $f(n,x)\in \Q[n,x]$ then
$$
f_1(n,x)=f(n,-x),f_2(n,x)=f(n,x^2),f_3(n,x)=f(n,-x^2).
$$
\textbf{Algorithm Pseudocode}

Input: A polynomial $f(n,x)\in \Q[n,x]$.

Output: All pairs $(n_0,\omega)$ with $n\in \Q$ and $\omega$ a root of unity for which $f(n_0,\omega)=0$.
\begin{enumerate}
    \item Check if $f(n,x)$ is irreducible. If not, factor $f$ and run the algorithm on irreducible components of $f$.
    \item Check if $f(n,x)$ is a polynomial in $x^m$ for an integer $m\geq 2$. If so, define $g(n,x)$ so that $g(n,x^m)=f(n,x)$ and run the algorithm on $g(n,x)$. It is simple to translate zeroes of $g(n,x)$ into zeroes of $f(n,x)$.
    \item Check if $f(n,x)$ is constant in $n$. If so, find the cyclotomic roots of $f(1,x)$ to find zeroes $\omega_1,\dots,\omega_k$. Output $(n_0,\omega_i)$ for any rational $n_0$.
    \item Check if $f(n,x)$ is constant in $x$. If so, use the Rational Roots Theorem to find the rational roots $n_1,\dots,n_s$ of $f(n,1)$. Output the parametric family $\{(n_j, \omega)\}$ where $\omega$ is any root of unity. 
    \item For $i\in \{1,2,3\}$:
    \begin{enumerate}
        \item Compute $h_i(n)=\Res(f,f_i,x)$.
        
        \item Using the Rational Roots Theorem, find the rational roots of $h_i$. Call them $r_{i,1},\dots,r_{i,k_i}$.
        
        \item For each $r_{i,j}$, find cyclotomic roots of $f(r_{i,j},x)$. For each $\omega$ output $(r_{i,j},\omega)$.
    \end{enumerate}
\end{enumerate}
\textbf{End Algorithm}

We now discuss the validity of the code. This entails ensuring that we find \emph{all} desired solutions, as well as ensuring we do not find any extraneous solutions. Suppose $n_0\in \Q$, $\omega$ is a root of unity, and $f(n_0,\omega)=0$. By Lemma 2.2, we also have $f_i(n_0,\omega)=0$ for some $i\in \{1,2,3\}$. Thus the resultant $\Res(f,f_i,x)$ will vanish at $n_0$. We can easily find this by applying the Rational Root Theorem to $\Res(f,f_i,x)$. Once we find $n_0$ the work of \cite{BD88} and \cite{BS02} ensures we can find $\omega$ by finding the cyclotomic roots of $f(n_0,x)$. Thus $(n_0,\omega)$ will be output in Step 5 of the algorithm.

Now suppose we have found an extraneous solution. This would occur if $\Res(f,f_i,x)=0$ for some $i$. For $i=2,3$ this is ruled out by Lemma 2.3, since Step 1 has the effect of ensuring we always work with irreducible polynomials. If $\Res(f,f_1,x)=0$, then $f(n,x)$ and $f(n,-x)$ share a factor. In this case it must be that $f(n,x)$ is a rational multiple of $f(n,-x)$. This can only occur if $f(n,x)$ is a polynomial in $x^2$, or if $f(n,x)=x\cdot g(n,x)$ where $g(n,x)$ is a polynomial in $x^2$. These cases are both ruled out by Steps 1 and 2. 
\subsection{Finding cyclotomic points for families of curves}
We now bootstrap the previous algorithm to finding cyclotomic points on families of curves. Our argument in this section relies heavily upon \cite{BS02}. 

Suppose $f(n,x,y)$ is a Laurent polynomial in $x$ and $y$, i.e. $f\in \mathbb{Z}[n,x,y,x^{-1},y^{-1}]$. Write 
$$
f(n,x,y)=\sum_{a,b}c_{a,b}(n)x^ay^b.
$$
We denote the \emph{support} of $f$, $\supp(f)$, to be $\supp(f):=\{(a,b):c_{a,b}(n)\neq0\}$. For example,
$$
\supp(3nxy+n^2x^2+xy^4)=\{(1,1),(2,0),(1,4)\}.
$$
From this we may define $\mathcal{L}(f)$ to be the lattice generated by the differences of elements in $\supp(f)$. Continuing our example, 
\begin{align*}
\mathcal{L}(3xy+x^2+xy^4)&=\Z\cdot (1,-1) + \Z\cdot (0,3)+\Z\cdot (1,-4)\\
&=\Z\cdot (1,-1)\oplus\Z\cdot (0,3).
\end{align*}
The following two observations are straightforward from the definition of $\mathcal{L}$. Suppose $\mathcal{L}(f)$ is rank 1, i.e. it is generated by a single vector $(a,b)$. If we let $u=x^ay^b$, then $f(n,x,y)$ may be written as 
$$
f(n,x,y)=x^sy^t g(n,u)
$$
for some monomial $x^sy^t$ and some Laurent polynomial $g\in \mathbb{Z}[n,u,u^{-1}].$ Alternatively, suppose $\mathcal{L}(f)$ is rank $2$, generated freely by $\{(a,b),(c,d)\}$. Letting $u=x^ay^b$ and $v=x^cy^d$, one can find a Laurent polynomial $g\in \mathbb{Z}[n,u,u^{-1},v,v^{-1}]$ for which 
$\mathcal{L}(g)=\mathbb{Z}\oplus\mathbb{Z}$ and for which 
$$
f(x,y)=x^sy^tg(n,u,v)
$$
for some monomial $x^sy^t$.

We now proceed with the algorithm, maintaining the notation developed in Lemma 2.4. Our objective is as follows. Given a polynomial $f(n,x,y)\in \Q[n,x,y]$, we wish to find solutions $(n_0,\omega,\tau)$ with $n_0\in \Q$ and $\omega,\tau$ roots of unity. 

\noindent\textbf{Algorithm Pseudocode}

Input: A polynomial $f(n,x,y)\in \Q[n,x,y]$.

Output: All triples $(n_0,\omega,\tau)$ with $n\in \Q$ and $\omega,\tau$ roots of unity for which $f(n_0,\omega,\tau)=0$.

\begin{enumerate}
	\item Check if $f(n,x,y)$ is irreducible. If not, factor $f$ and run the algorithm on the irreducible factors of $f$. 
    \item Check if $\mathcal{L}(f)$ is rank $1$. If so, chose $u=x^ay^b$ so that $g(n,x,y)=x^sy^tg(n,u)$. Run Algorithm 1 on the polynomial $g(n,u)$. Translating solutions of $g(n,u)$ to roots of $f(n,x,y)$ is straightforward.
    \item Check if $\mathcal{L}(f)=\Z\oplus\Z$. If not, find $g$ for which $\mathcal{L}(g)=\mathbb{Z}\oplus\mathbb{Z}$ and $f(n,x,y)=x^sy^tg(n,u,v)$. Continue the algorithm on $g.$ For each solution obtained one can recover the solutions for $f$ by following the steps of Section 3.7 in [BS].\\
    \item For $i\in \{1,2,\dots,7\}$:
    \begin{enumerate}
        \item Compute $g_i=\Res(f,f_i,y)$.
        \item Run Algorithm $2$ on $g_i$ to obtain zeroes $(n_{i,1},\omega_{i,1}),\dots,(n_{i,j_i},\omega_{i,j_i}).$
        \begin{enumerate}
            \item For $l\in \{1,\dots,j_i\}$, compute $G_l=f(n_{i,l},\omega_{i,l},y).$
            \item Compute the cyclotomic roots of $G_l$ via Algorithm 1. For each such root $\tau,$ output $(n_{i,l},\omega_{i,l},\tau)$.
        \end{enumerate}
    \end{enumerate}
\end{enumerate}
\textbf{End Algorithm}

Again we verify that the above code returns the expected result. Suppose $f(n_0,\omega,\tau)=0$ for $n_0\in \Q$ and $\omega,\tau$ roots of unity. By Lemma 2.3, $f_i(n_0,\omega,\tau)=0$ as well for some $i\in \{1,2,\dots,7\}$. In particular the resultant $g_i$ will vanish at $(n_0,\omega)$. The correctness of Algorithm 2 ensures that $(n_0,\omega)$ will be found in Step 4, (b). Once $(n_0,\omega)$ have been identified it is then a simple matter to find $\tau$ by examining the univariate polynomial $f(n_0,\omega,y)$. 

Now suppose we have found an extraneous solution. We will be led to the same contradiction as \cite{BS02}. As with Algorithm 2, this would occur if a resultant calculation $g_i=\Res(f,f_i,y)$ resulted in $g_i(n,x)\equiv0$. Since Step 1 ensures $f$ is irreducible, this can only occur if $f$ divides $f_i$ for some $i\in \{1,2,\dots,7\}$. Suppose $f$ divides $f_1$. As with Algorithm 2, this can only occur if $f$ is a Laurent polynomial in $x^2$. But this would imply that $\mathcal{L}(f)\neq \mathbb{Z}\oplus\mathbb{Z}$, contradicting Step 3. A similar argument shows that $f$ does not divide $f_2$ or $f_3$. Suppose now that $f$ divides $f_i$, for $i\in \{4,5,6,7\}$. Applying the ring automorphisms $x\to -x$, $y\to -y$ to $\mathbb{Q}[n,x,y,x^{-1},y^{-1}]$ shows that $f_1,f_2$, and $f_3$ also divide $f_i$. As $f$,$f_1$,$f_2$, and $f_3$ are coprime, we are led to the product $f\cdot f_1\cdot f_2\cdot f_3$ dividing $f_i$, which is clearly a contradiction in degree. 
\section{Metallic Means and Polygon Diagonals}
We now apply the methods of the previous section to prove Theorem 1.1. Our calculations may be checked in the Sage code provided in the ancillary arXiv file. 
\begin{proof}[Proof of Theorem 1.1]
Suppose $\phi_{n_0}$ is a metallic mean which may be represented as a ratio of diagonals of a regular $N$-gon. By viewing the polygon as the set of points $\{\zeta_N^a:1\leq a\leq N\}$ 
in the complex plane, we obtain 
$$
\phi_{n_0}=\frac{|1-\zeta_N^a|}{|1-\zeta_N^b|}.
$$
Noting that $\overline{\zeta_N}=\zeta_N^{-1}$ and squaring both sides, we obtain 
$$
\phi_{n_0}^{2}=\frac{(1-\zeta_N^a)(1-\zeta_N^{-a})}{(1-\zeta_N^b)(1-\zeta_N^{-b})}.
$$
Now for any metallic ratio $\phi_n$ we have $\phi_n^2-2+\phi_n^{-2}=(\phi_n-\phi_n^{-1})^2=n^2$. In particular, we obtain
$$
\phi_n^2+\phi_n^{-2}-n^2-2=0.
$$
Thus via substitution we see that $(n_0^2,\zeta_N^a,\zeta_N^b)$ must be a zero of the following multivariate polynomial: 
\begin{align*}
f(n,x,y)=-x^3y^3&n^2 + x^4y^2 - 2x^3y^3 + x^2y^4 + 2x^3y^2n^2 + 2x^2y^3n^2 - x^3yn^2 - 4x^2y^2n^2 \\&- xy^3n^2 - 2x^3y + 4x^2y^2 - 2xy^3 + 2x^2yn^2 + 2xy^2n^2 - xyn^2 + x^2 - 2xy + y^2.
\end{align*}

In particular for the classification of Theorem 1.1 we may run the algorithms of Section 2 to find all solutions to $f(n,x,y)=0$ whose first coordinate is rational and last two coordinates are roots of unity. Given such a triple $(a,\tau,\omega)$ we get two desired metallic means, corresponding to the positive and negative square roots of $a$:
\begin{align*}
\phi_{\sqrt{a}}&=\frac{|1-\omega|}{|1-\tau|}\\
\phi_{-\sqrt{a}}&=\frac{|1-\tau|}{|1-\omega|}.
\end{align*}

\end{proof}
Below we list explicit realizations of the metallic means as ratios of polygon diagonals. We omit the trivial solution for $n=0$, and only list solutions for $n$ positive. To obtain the negative solutions one merely needs to take reciprocals.
\begin{center}
\bgroup
\def\arraystretch{1.5}
    \begin{tabular}{|c|c|c|c|}
        \hline
         $n$&$\phi_n$&Numerator Diagonal&Denominator Diagonal\\
         \hline
         $1$&$\frac{1+\sqrt{5}}{2}$&$|1-\zeta_5^2|$&$|1-\zeta_5|$ \\
         \hline
         $2$&${1+\sqrt{2}}$&$|1-\zeta_8^3|$&$|1-\zeta_8|$\\
         \hline
         $\tfrac{3}{2}$&${2}$&$|1-\zeta_6^3|$&$|1-\zeta_6|$\\
         \hline
         $\sqrt{2}$&$\frac{\sqrt{2}+\sqrt{6}}{2}$&$|1-\zeta_{12}^5|$&$|1-\zeta_{12}^2|$\\
         \hline
         $\sqrt{5}$&$\frac{3+\sqrt{5}}{2}$&$|1-\zeta_{10}^3|$&$|1-\zeta_{10}|$\\
         \hline
         $\sqrt{12}$&${2+\sqrt{3}}$&$|1-\zeta_{12}^5|$&$|1-\zeta_{12}|$\\
         \hline
         $\tfrac{\sqrt{2}}{2}$&${\sqrt{2}}$&$|1-\zeta_{24}^6|$&$|1-\zeta_{24}^4|$\\
         \hline
         $\tfrac{\sqrt{6}}{6}$&$\frac{\sqrt{6}}{2}$&$|1-\zeta_{12}^4|$&$|1-\zeta_{12}^3|$\\
         \hline
         $\tfrac{2\sqrt{3}}{3}$&${\sqrt{3}}$&$|1-\zeta_6^2|$&$|1-\zeta_6|$\\
         \hline
         $\tfrac{\sqrt{12}}{12}$&$\frac{2\sqrt{3}}{3}$&$|1-\zeta_6^3|$&$|1-\zeta_6^2|$\\
         \hline
         
    \end{tabular}
\egroup
\end{center}
\section{Generating real subfields of cyclotomic fields}
We now change our focus to the question of when a ratio of polygon diagonals generates a ``small'' extension of $\Q$. To simplify our discussion, we will restrict to the case in which $d_1 = |1-\zeta_N^a|$ and $d_2 = |1-\zeta_N^b|$ with $a$ and $b$ coprime; in doing so we rule out the possibility of, for instance, treating a ratio of square diagonals as a ratio of octagon diagonals, which simplifies the exposition greatly.

We will start by analyzing the complementary question of when a ratio $\tfrac{d_1}{d_2}$ fails to generate the extension $\Q[d_1,d_2]$. From our assumption on $d_1$ and $d_2$ it is straightforward to show $\Q[d_1,d_2] = \Q[\zeta_{4N}^+]$, and thus the degree of $\Q[d_1,d_2]$ over $\Q$ is $\frac{\phi(4N)}{2}$. Since this quantity diverges with $N$, we have reduced Theorem 1.2 to showing that $[\Q[d_1,d_2]:\Q[\tfrac{d_1}{d_2}]]$ is bounded above. As with the previous result, our approach will involve finding cyclotomic solutions to a (Laurent) polynomial. 

Let us call a ratio of polygon diagonals $\tfrac{d_1}{d_2}$ for which $\Q[\tfrac{d_1}{d_2}]\neq \Q[d_1,d_2]$ \emph{defective}. In the same vein as the previous section, write 
\begin{align*}
d_1&=|1-\zeta_N^a|\\
&=\sqrt{(1-\zeta_N^a)(1-\zeta_N^{-a})}\\
&=\sqrt{-\zeta_N^{-a}(1-\zeta_N^a)^2}\\
&=\zeta_{4N}^N\zeta_{2N}^{-a}(1-\zeta_N^a)\\
&=\zeta_{4N}^{N-2a}+\zeta_{4N}^{3N+2a}.
\end{align*}
We note in passing that the product of the two summands above equals 1; this will be crucial in what follows.
We now know that a ratio of two diagonals in a regular $N$-gon may be written as 
$$
\frac{d_1}{d_2}=\frac{\zeta_{4N}^{N-2a}+\zeta_{4N}^{3N+2a}}{\zeta_{4N}^{N-2b}+\zeta_{4N}^{3N+2b}}.
$$
Suppose that $\frac{d_1}{d_2}$ is defective, and let $\sigma$ be a nontrivial automorphism in $\Gal(\mathbb{Q}[d_1,d_2]/\mathbb{Q})$ fixing the ratio. Let $\hat\sigma$ be a lift of $\sigma$ to $\Gal(\Q[\zeta_{4N}]/\Q)$. We have 
\begin{align*}
\frac{\zeta_{4N}^{N-2a}+\zeta_{4N}^{3N+2a}}{\zeta_{4N}^{N-2b}+\zeta_{4N}^{3N+2b}}&=\hat\sigma\left(\frac{\zeta_{4N}^{N-2a}+\zeta_{4N}^{3N+2a}}{\zeta_{4N}^{N-2b}+\zeta_{4N}^{3N+2b}}\right)\\
&=\frac{\hat\sigma(\zeta_{4N}^{N-2a})+\hat\sigma(\zeta_{4N}^{3N+2a})}{\hat\sigma(\zeta_{4N}^{N-2b})+\hat\sigma(\zeta_{4N}^{3N+2b})}.
\end{align*}
We are naturally led to search for cyclotomic solutions to the equation 
$$
\frac{x_1+x_1^{-1}}{x_2+x_2^{-1}}=\frac{y_1+y_1^{-1}}{y_2+y_2^{-1}}.
$$
A simple algebraic manipulation reduces this to finding cyclotomic solutions to 
$$
f(x_1,x_2,y_1,y_2)=(x_1y_2+x_1^{-1}y_2^{-1})+(x_1y_2^{-1}+x_1y_2)+(x_2y_1+x_2^{-1}y_1^{-1})+(x_2y_1^{-1}+x_2^{-1}y_1).
$$
One might hope to find the cyclotomic solutions to $f$ by iteratively taking resultants, as in the previous section. However it turns out that this is computationally infeasible; here is a heuristic for why. To do so would require first scaling by $x_1x_2y_1y_2$ to clear denominators, resulting in a quartic polynomial. Upon taking resultants to reduce to a univariate polynomial, one obtains resultants $r_1(x_2,y_1,y_2)$, $r_2(y_1,y_2)$, and $r_3(y_2)$ of total degrees approximately $20,70,$ and $1300$ respectively. Should we find a cyclotomic zero $\omega$ for $r_3$, one then has to lift this to find a cyclotomic zero to $r_2(y_1,\omega)$. To find the cyclotomic zeroes of this polynomial one first computes a field norm in order to work with a polynomial with purely rational coefficients; but if $\omega$ has conductor $N$, such a norm would have degree $\sim40^{\phi(N)}$. This quickly becomes too much to ask of our computers! Fortunately we can work around this as follows.

Note that $f$ is comprised of eight monomials which appear in complex conjugate pairs when the variables are specialized to roots of unity (hence the parentheses).  Thus while the computational techniques of the previous sections fail, we can still solve the problem by using the techniques of W\l odarski \cite{Wlo69} and of Conway and Jones [CJ] for solving trigonometric diophantine equations. Namely, the identity 
$$2\cos(\tfrac{2\pi a}{b})=\zeta_b^a+\zeta_b^{-a}$$
suggests first finding rational solutions to the equation 
$$
F(A,B,C,D)=\cos(\pi A)+\cos(\pi B)+\cos(\pi C)+\cos(\pi D)=0.
$$
This is in part the content of \cite{Wlo69} and \cite{CJ76}. 
\begin{lemma}[\cite{Wlo69} Theorem 1, \cite{CJ76} Theorem 6]
The rational solutions to 
$$
\cos(\pi A)+\cos(\pi B)+\cos(\pi C)+\cos(\pi D)=0
$$
come in two parametric families and 10 `sporadic' solutions. The parametric families have one of the two following forms:
$$
\{A,B,C,D\}=\{\alpha,\beta,1-\alpha,1-\beta\}
$$
$$
\{A,B,C,D\}=\left\{\alpha,\frac{2}{3}-\alpha,\frac{2}{3}+\alpha,\frac{1}{2}\right\}.
$$
The ten sporadic solutions occur in 5 pairs which are negations of one another; in these cases, $\{A,B,C,D\}$ is one of the following:

\begin{align*}
&\left\{\frac{2}{5},\frac{1}{2},\frac{4}{5},\frac{1}{3}\right\}&\;&\left\{\frac{3}{5},\frac{1}{2},\frac{1}{5},\frac{2}{3}\right\}\\
&\left\{1,\frac{1}{5},\frac{3}{5},\frac{1}{3}\right\}&\;&\left\{0,\frac{4}{5},\frac{2}{5},\frac{2}{3}\right\}\\
&\left\{\frac{2}{5},\frac{7}{15},\frac{13}{15},\frac{1}{3}\right\}&\;&\left\{\frac{3}{5},\frac{8}{15},\frac{2}{15},\frac{2}{3}\right\}\\
&\left\{\frac{4}{5},\frac{1}{15},\frac{11}{15},\frac{1}{3}\right\}&\;&\left\{\frac{1}{5},\frac{14}{15},\frac{4}{15},\frac{2}{3}\right\}\\
&\left\{\frac{2}{7},\frac{4}{7},\frac{6}{7},\frac{1}{3}\right\}&\;&\left\{\frac{1}{7},\frac{3}{7},\frac{5}{7},\frac{2}{3}\right\}\\
\end{align*}

\qed
\end{lemma}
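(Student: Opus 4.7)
The plan is to convert the trigonometric equation into a vanishing sum of roots of unity and then invoke the Conway-Jones structure theorem for such sums. Setting $\z{X} = e^{\pi i X}$ for each $X \in \{A,B,C,D\}$, the rationality of the $X$ makes each $\z{X}$ a root of unity, and the equation becomes
$$\z{A} + \z{A}^{-1} + \z{B} + \z{B}^{-1} + \z{C} + \z{C}^{-1} + \z{D} + \z{D}^{-1} = 0,$$
a vanishing sum of eight roots of unity that is closed under complex conjugation $\z{}\leftrightarrow \z{}^{-1}$.

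I would next invoke the structure theorem for such expressions: every vanishing sum of roots of unity is a nonnegative integer combination of rotated \emph{minimal} vanishing sums, i.e. vanishing sums with no vanishing proper subsum. Conway and Jones give a complete list of the minimal sums of length at most $9$; up to a single common rotation, the possibilities of length at most $8$ are the two-term sum $1+(-1)$, the prime-length sums of all $p$th roots for $p\in\{3,5,7\}$, and a short list of explicit six- and seven-term sums built out of primitive $15$th and $30$th roots.

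The bulk of the proof is a case analysis on how the eight terms decompose into these minimal pieces, subject to the constraint that the full multiset be conjugation-invariant. The admissible partitions of $8$ are $2+2+2+2$, $2+3+3$, $3+5$, $2+6$, and $8$. The all-pair case $2+2+2+2$ forces the four conjugate pairs to match off as negatives of one another, producing (up to relabelling) the first parametric family $\{\alpha,\beta,1-\alpha,1-\beta\}$. The $2+3+3$ case pins the two-term piece onto the real axis, so one argument equals $\tfrac{1}{2}$, and since each cube-root triple must itself be closed under conjugation, the other three arguments are forced into the shape $\{\alpha,\tfrac{2}{3}-\alpha,\tfrac{2}{3}+\alpha\}$, giving the second family. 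The remaining partitions $3+5$, $2+6$, and $8$ force at least one argument to have denominator divisible by $5$, $7$, or $15$, and a finite case-check recovers exactly the ten listed sporadic tuples.

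The main obstacle is this last enumeration. One must intersect the rotation freedom inside each candidate minimal sum with the rigid demand that the eight terms partition into four $\{\z{},\z{}^{-1}\}$ pairs, and then confirm that this yields precisely the five $\pm$-pairs of sporadic quadruples rather than either a larger finite set or additional one-parameter loci. This bookkeeping is essentially the content of Conway and Jones' Theorem 6 and of W\l odarski's argument, and is where I would expect the overwhelming majority of the effort to go; the preliminary translation into roots of unity and the analysis of the two parametric families are by comparison routine.
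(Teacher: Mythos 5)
The paper offers no proof of this lemma: it is stated as an imported result from W\l odarski and from Conway and Jones, and the end-of-proof marker following the statement simply records that it is being quoted. So there is no argument in the paper to compare against; your sketch is essentially a condensed account of Conway and Jones' own proof of their Theorem 6, which in turn rests on their Theorem 5 classifying minimal vanishing sums of roots of unity of small weight. That is the correct framework, and identifying the conjugation-invariance constraint as the thing that cuts the classification down to the stated families is the right emphasis.

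Two small corrections to the sketch. First, your inventory of minimal vanishing sums of weight at most $8$ omits the weight-$8$ relations such as $\zeta_7+\zeta_7^2+\cdots+\zeta_7^6+\zeta_6+\zeta_6^5=0$; you do later include the partition ``$8$'' among the cases to analyze, and indeed the sporadic tuple $\left\{\tfrac{2}{7},\tfrac{4}{7},\tfrac{6}{7},\tfrac{1}{3}\right\}$ is realized by exactly such a single weight-$8$ minimal sum, so the inventory as written contradicts your own later case list. Second, in the $2+3+3$ case the claim that ``each cube-root triple must itself be closed under conjugation'' is too strong: for a generic member of the family $\left\{\alpha,\tfrac{2}{3}-\alpha,\tfrac{2}{3}+\alpha,\tfrac{1}{2}\right\}$ the two $R_3$ triples are conjugate \emph{to one another}, not each self-conjugate, and it is precisely this exchange symmetry that leaves $\alpha$ as a free parameter. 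If each triple were forced to be individually self-conjugate it would have to be $\{1,\zeta_3,\zeta_3^2\}$ or its negation, which produces only the isolated solution $\left\{0,\tfrac{2}{3},1,\tfrac{1}{3}\right\}$ (already a special case of the first family) rather than a one-parameter locus. Neither slip affects the final classification, but the second one misidentifies the mechanism that gives rise to the second parametric family, and a careful writeup would need to phrase the conjugation constraint as acting on the union of the two triples rather than on each triple separately.
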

This allows us to find the cyclotomic solutions of our Laurent polynomial in question as follows. Note that not every solution will be germane to the question of when the ratio of two diagonals is defective, since not every solution will be related by an automorphism in $\Gal(\overline{\Q}/\Q)$. However for completeness we will not restrict ourselves by this condition just yet. 

Given a solution $f(\omega_1,\omega_2,\tau_1,\tau_2)=0$, where $\omega_i$ and $\tau_i$ are roots of unity, we are led to a solution of the trigonometric diophantine equation above as discussed. Write the solution as 
$$
F(2\tfrac{a_1}{b_1},2\tfrac{a_2}{b_2},2\tfrac{a_3}{b_3},2\tfrac{a_4}{b_4})=0,
$$
for integers $a_i,b_i$.
This can be rewritten as 
$$
\zeta_{b_1}^{a_1}+\zeta_{b_1}^{-a_1}+\zeta_{b_1}^{a_2}+\zeta_{b_2}^{-a_2}+\zeta_{b_3}^{a_3}+\zeta_{b_3}^{-a_3}+\zeta_{b_4}^{a_4}+\zeta_{b_4}^{-a_4}=0.
$$
It follows that for some permutation $\pi:\{1,2,3,4\}\to \{1,2,3,4\}$,
\begin{align*}
\omega_1\tau_2+\omega_1^{-1}\tau_2^{-1}&=\zeta_{b_{\pi(1)}}^{a_{\pi(1)}}+\zeta_{b_{\pi(1)}}^{-a_{\pi(1)}}\\
\omega_1\tau_2^{-1}+\omega_1^{-1}\tau_2&=\zeta_{b_{\pi(2)}}^{a_{\pi(2)}}+\zeta_{b_{\pi(2)}}^{-a_{\pi(2)}}\\
\omega_2\tau_1+\omega_1^{-1}\tau_2^{-1}&=\zeta_{b_{\pi(3)}}^{a_{\pi(3)}}+\zeta_{b_{\pi(3)}}^{-a_{\pi(3)}}\\
\omega_2\tau_1^{-1}+\omega_2^{-1}\tau_2&=\zeta_{b_{\pi(4)}}^{a_{\pi(4)}}+\zeta_{b_{\pi(4)}}^{-a_{\pi(4)}}.
\end{align*}
To simplify this further, note this implies there is a function $\sign:\{1,2,3,4\}\to \{1,-1\}$ for which 
\begin{align*}
\omega_1\tau_2&=\zeta_{b_{\pi(1)}}^{\sign(1)a_{\pi(1)}}\\
\omega_1\tau_2^{-1}&=\zeta_{b_{\pi(2)}}^{\sign(2)a_{\pi(2)}}\\
\omega_2\tau_1&=\zeta_{b_{\pi(3)}}^{\sign(3)a_{\pi(3)}}\\
\omega_2\tau_1^{-1}&=\zeta_{b_{\pi(4)}}^{\sign(4)a_{\pi(4)}}.
\end{align*}
Once we have reduced to these 4 equalities it is simple to solve for $\omega_1,\omega_2,\tau_1,\tau_2$ up to negation using, for instance,
\begin{align*}
\omega_1^2&=\zeta_{b_{\pi(1)}}^{\sign(1)a_{\pi(1)}}\cdot \zeta_{b_{\pi(2)}}^{\sign(2)a_{\pi(2)}}\\
\omega_2^2&=\zeta_{b_{\pi(3)}}^{\sign(3)a_{\pi(3)}}\cdot \zeta_{b_{\pi(4)}}^{\sign(4)a_{\pi(4)}}.
\end{align*}
As we are solving a quadratic equation for $\omega_i$, each choice of $\pi$ and $\sign$ will lead to $4$ solutions. Thus a priori, for each solution 
$$
F(2\tfrac{a_1}{b_1},2\tfrac{a_2}{b_2},2\tfrac{a_3}{b_3},2\tfrac{a_4}{b_4})=0
$$
we should expect $4!\cdot 2^4\cdot 4=1,536$ solutions $f(\omega_1,\omega_2,\tau_1,\tau_2)=0$. For the sake of brevity in this paper we will reduce this in three ways. Note that $f$ admits the following symmetries: 
\begin{enumerate}
\item $f(x_1,x_2,y_1,y_2)=f(x_2,x_1,y_2,y_1)$\\
\item $f(x_1,x_2,y_1,y_2)=f(x_1^{-1},x_2,y_1,y_2)$\\
\item $f(x_1,x_2,y_1,y_2)=f(x_1,x_2^{-1},y_1,y_2)$\\
\item $f(x_1,x_2,y_1,y_2)=f(y_2,x_2,y_1,x_1)$\\
\item $f(x_1,x_2,y_1,y_2)=f(x_1,y_1,x_2,y_2)$\\
\item $f(x_1,x_2,y_1,y_2)=f(y_2^{-1},x_2,y_1,x_1^{-1})$\\
\item $f(x_1,x_2,y_1,y_2)=f(x_1,y_1^{-1},x_2^{-1},y_2)$.\\
\end{enumerate}
The first three operations allow us to reduce the number of permutations that we need to check; for instance, if we have a permutation assigning 
\begin{align*}
\omega_1\tau_2+\omega_1^{-1}\tau_2^{-1}&=\zeta_{b_{1}}^{a_{1}}+\zeta_{b_{1}}^{-a_{1}}\\
\omega_1\tau_2^{-1}+\omega_1^{-1}\tau_2&=\zeta_{b_{2}}^{a_{2}}+\zeta_{b_{2}}^{-a_{2}}\\
\omega_2\tau_1+\omega_1^{-1}\tau_2^{-1}&=\zeta_{b_{3}}^{a_{3}}+\zeta_{b_{3}}^{-a_{3}}\\
\omega_2\tau_1^{-1}+\omega_2^{-1}\tau_2&=\zeta_{b_{4}}^{a_{4}}+\zeta_{b_{4}}^{-a_{4}},
\end{align*}
then applying the first listed symmetry gives a solution for which 
\begin{align*}
\omega_1\tau_2+\omega_1^{-1}\tau_2^{-1}&=\zeta_{b_{3}}^{a_{3}}+\zeta_{b_{3}}^{-a_{3}}\\
\omega_1\tau_2^{-1}+\omega_1^{-1}\tau_2&=\zeta_{b_{4}}^{a_{4}}+\zeta_{b_{4}}^{-a_{4}}\\
\omega_2\tau_1+\omega_1^{-1}\tau_2^{-1}&=\zeta_{b_{1}}^{a_{1}}+\zeta_{b_{1}}^{-a_{1}}\\
\omega_2\tau_1^{-1}+\omega_2^{-1}\tau_2&=\zeta_{b_{2}}^{a_{2}}+\zeta_{b_{2}}^{-a_{2}}.
\end{align*}
Thus by composing operations 1 thru 3 we can generate eight distinct permutations, and hence we will only list three representatives explicitly which generate all 24 under these operations. Similarly, operations 4 thru 7 allow us to change the sign function arbitrarily; again, by way of example if we have a solution for which
\begin{align*}
\omega_1\tau_2&=\zeta_{b_{1}}^{a_{1}}\\
\omega_1\tau_2^{-1}&=\zeta_{b_{2}}^{a_{2}}\\
\omega_2\tau_1&=\zeta_{b_{3}}^{a_{3}}\\
\omega_2\tau_1^{-1}&=\zeta_{b_{4}}^{a_{4}}
\end{align*}
then after applying operation 4 we will have a solution for which 
\begin{align*}
\omega_1\tau_2&=\zeta_{b_{1}}^{a_{1}}\\
\omega_1\tau_2^{-1}&=\zeta_{b_{2}}^{-a_{2}}\\
\omega_2\tau_1&=\zeta_{b_{3}}^{a_{3}}\\
\omega_2\tau_1^{-1}&=\zeta_{b_{4}}^{a_{4}}.
\end{align*}
Thus for a fixed permutation there is no need to worry about a sign function as long as we allow for arbitrary composition of operations 4 thru 7. Finally, we will rampantly use the plus or minus sign $\pm$ to simplify the solutions to quadratic equations. We hope no confusion arises from these simplifications.

With this discussion in mind, we have the following classification. The proof is immediate given Lemma 4.1 and the previous discussion.
\begin{lemma}
Let $(\omega_1,\omega_2,\tau_1,\tau_2)$ be a cyclotomic solution to $f$. Then $(\omega_1,\omega_2,\tau_1,\tau_2)$ can be obtained via the seven listed symmetries from one of the following families of solutions:
\begin{itemize}
\item For any root of unity $\zeta_b^a$, one of the following three parametric families:
\begin{enumerate}
\item 
$(x_1,y_2) = \pm(\zeta_3^2\zeta_b^a,\zeta_3)$,
$(x_2,y_1) = \pm(\zeta_{24}^{11}\zeta_{2b}^a,\zeta_{24}^5\zeta_{2b}^a)$.
\item 
$(x_1,y_2) = \pm(\zeta_3\zeta_b^a,\zeta_3^2)$,
$(x_2,y_1) = \pm(\zeta_{24}^7\zeta_{2b}^a,\zeta_{24}\zeta_{2b}^a)$.
\item 
$(x_1,y_2) = \pm(\zeta_8\zeta_{2b}^a,\zeta_8^7\zeta_{2b}^a)$,
$(x_2,y_1) = \pm(\zeta_b^a,\zeta_3)$.
\end{enumerate}
\item For any two roots of unity $\zeta_b^a, \zeta_c^d$, one of the following three parametric families:
\begin{enumerate}
\item 
$(x_1,y_2) = \pm(\zeta_4\zeta_b^a,\zeta_4^3)$,
$(x_2,y_1) = \pm(\zeta_4,\zeta_c^d,\zeta_4^3)$.
\item
$(x_1,y_2) = \pm(\zeta_{2bd}^{ad+bc},\zeta_{2bd}^{ad-bc})$,
$(x_2,y_1) = \pm(\zeta_{2bd}^{-ad-bc},\zeta_{2bd}^{-ad+bc})$.
\item 
$(x_1,y_2) = \pm(\zeta_4\zeta_{2bd}^{ad+bc},\zeta_4^3\zeta_{2bd}^{ad-bc})$,
$(x_2,y_1) = \pm(\zeta_4\zeta_{2bd}^{-ad-bc},\zeta_4^3\zeta_{2bd}^{-ad+bc})$.
\end{enumerate}
\item Any entry from the following table:
\end{itemize}
$$
\begin{array}{|c|c|c|c|c|c|}
\hline
x_1y_2+x_1^{-1}y_2^{-1}&x_1y_2^{-1}+x_1^{-1}y_2&x_2y_1+x_2^{-1}y_1^{-1}&x_2y_1^{-1}+x_2^{-1}y_1&(x_1,y_2)&(x_2,y_1)\\
\hline
\hline
\zeta_{5}+\zeta_5^{4}&\zeta_4+\zeta_4^{3}&\zeta_5^2+\zeta_5^{3}&\zeta_6+\zeta_6^{5}&\pm(\zeta_{40}^9,\zeta_{40}^{39})&\pm(\zeta_{60}^{17},\zeta_{60}^{7})\\
\hline
\zeta_{5}+\zeta_5^{4}&\zeta_5^2+\zeta_5^{3}&\zeta_4+\zeta_4^{3}&\zeta_6+\zeta_6^{5}&\pm(\zeta_{10}^3,\zeta_{10}^{9})&\pm(\zeta_{24}^{5},\zeta_{24})\\
\hline
\zeta_{5}+\zeta_5^{4}&\zeta_6+\zeta_6^{5}&\zeta_4+\zeta_4^{3}&\zeta_5^2+\zeta_5^{3}&\pm(\zeta_{60}^{11},\zeta_{60})&\pm(\zeta_{40}^{13},\zeta_{40}^{37})\\
\hline
\hline
\zeta_{10}^3+\zeta_{10}^{7}&\zeta_4+\zeta_4^{3}&\zeta_{10}+\zeta_{10}^9&\zeta_3+\zeta_3^{2}&\pm(\zeta_{40}^{11},\zeta_{40})&\pm(\zeta_{60}^{13},\zeta_{60}^{53})\\
\hline
\zeta_{10}^3+\zeta_{10}^{7}&\zeta_{10}+\zeta_{10}^9&\zeta_4+\zeta_4^{3}&\zeta_3+\zeta_3^{2}&\pm(\zeta_{10}^2,\zeta_{10})&\pm(\zeta_{24}^{7},\zeta_{24}^{23})\\
\hline
\zeta_{10}^3+\zeta_{10}^{7}&\zeta_3+\zeta_3^{2}&\zeta_4+\zeta_4^{3}&\zeta_{10}+\zeta_{10}^9&\pm(\zeta_{60}^{19},\zeta_{60}^{59})&\pm(\zeta_{40}^{7},\zeta_{40}^{3})\\
\hline
\hline
\zeta_{2}+\zeta_2&\zeta_{10}+\zeta_{10}^{9}&\zeta_{10}^3+\zeta_{10}^{7}&\zeta_6+\zeta_6^{5}&\pm(\zeta_{10}^3,\zeta_{10}^{2})&\pm(\zeta_{60}^{14},\zeta_{60}^{4})\\
\hline
\zeta_{2}+\zeta_2&\zeta_{10}^3+\zeta_{10}^{7}&\zeta_{10}+\zeta_{10}^{9}&\zeta_6+\zeta_6^{5}&\pm(\zeta_{10}^4,\zeta_{10})&\pm(\zeta_{60}^{8},\zeta_{60}^{58})\\
\hline
\zeta_{2}+\zeta_2&\zeta_6+\zeta_6^{5}&\zeta_{10}+\zeta_{10}^{9}&\zeta_{10}^3+\zeta_{10}^{7}&\pm(\zeta_{6}^2,\zeta_{6})&\pm(\zeta_{10}^{2},\zeta_{10}^{9})\\
\hline
\hline
\zeta_{2}^0+\zeta_{2}^{0}&\zeta_5^2+\zeta_5^{3}&\zeta_{5}+\zeta_{5}^4&\zeta_3+\zeta_3^{2}&\pm(\zeta_{5},\zeta_{5}^4)&\pm(\zeta_{15}^{4},\zeta_{15}^{14})\\
\hline
\zeta_{2}^0+\zeta_{2}^{0}&\zeta_{5}+\zeta_{5}^4&\zeta_5^2+\zeta_5^{3}&\zeta_3+\zeta_3^{2}&\pm(\zeta_{10},\zeta_{10}^9)&\pm(\zeta_{30}^{11},\zeta_{30})\\
\hline
\zeta_{2}^0+\zeta_{2}^{0}&\zeta_3+\zeta_3^{2}&\zeta_5^2+\zeta_5^{3}&\zeta_{5}+\zeta_{5}^4&\pm(\zeta_{6},\zeta_{6}^5)&\pm(\zeta_{10}^3,\zeta_{10})\\
\hline
\hline
\zeta_{5}+\zeta_5^4&\zeta_{30}^7+\zeta_{30}^{23}&\zeta_{30}^{13}+\zeta_{30}^{17}&\zeta_6+\zeta_6^{5}&\pm(\zeta_{60}^{13},\zeta_{60}^{59})&\pm(\zeta_{30}^{9},\zeta_{30}^{4})\\
\hline
\zeta_{5}+\zeta_5^4&\zeta_{30}^{13}+\zeta_{30}^{17}&\zeta_{30}^7+\zeta_{30}^{23}&\zeta_6+\zeta_6^{5}&\pm(\zeta_{60}^{19},\zeta_{60}^{53})&\pm(\zeta_{30}^{6},\zeta_{30})\\
\hline
\zeta_{5}+\zeta_5^4&\zeta_6+\zeta_6^{5}&\zeta_{30}^7+\zeta_{30}^{23}&\zeta_{30}^{13}+\zeta_{30}^{17}&\pm(\zeta_{60}^{11},\zeta_{60})&\pm(\zeta_{30}^{10},\zeta_{30}^{27})\\
\hline
\hline
\zeta_{10}^3+\zeta_{10}^{7}&\zeta_{15}^4+\zeta_{15}^{11}&\zeta_{15}+\zeta_{15}^{14}&\zeta_3+\zeta_3^{2}&\pm(\zeta_{60}^{17},\zeta_{60}^{59})&\pm(\zeta_{30}^{6},\zeta_{30}^{26})\\
\hline
\zeta_{10}^3+\zeta_{10}^{7}&\zeta_{15}+\zeta_{15}^{14}&\zeta_{15}^4+\zeta_{15}^{11}&\zeta_3+\zeta_3^{2}&\pm(\zeta_{60}^{11},\zeta_{60}^{7})&\pm(\zeta_{30}^{9},\zeta_{30}^{29})\\
\hline
\zeta_{10}^3+\zeta_{10}^{7}&\zeta_3+\zeta_3^{2}&\zeta_{15}^4+\zeta_{15}^{11}&\zeta_{15}+\zeta_{15}^{14}&\pm(\zeta_{60}^{19},\zeta_{60}^{59})&\pm(\zeta_{30}^{5},\zeta_{30}^{3})\\
\hline
\hline
\zeta_{5}^2+\zeta_5^3&\zeta_{30}+\zeta_{30}^{29}&\zeta_{30}^{11}+\zeta_{30}^{19}&\zeta_6+\zeta_6^{5}&\pm(\zeta_{60}^{13},\zeta_{60}^{11})&\pm(\zeta_{30}^{8},\zeta_{30}^{3})\\
\hline
\zeta_{5}^2+\zeta_5^3&\zeta_{30}^{11}+\zeta_{30}^{19}&\zeta_{30}+\zeta_{30}^{29}&\zeta_6+\zeta_6^{5}&\pm(\zeta_{60}^{23},\zeta_{60})&\pm(\zeta_{30}^{3},\zeta_{30}^{28})\\
\hline
\zeta_{5}^2+\zeta_5^3&\zeta_6+\zeta_6^{5}&\zeta_{30}+\zeta_{30}^{29}&\zeta_{30}^{11}+\zeta_{30}^{19}&\pm(\zeta_{60}^{17},\zeta_{60}^{7})&\pm(\zeta_{30}^{6},\zeta_{30}^{25})\\
\hline
\hline
\zeta_{10}+\zeta_{10}^{9}&\zeta_{15}^7+\zeta_{15}^{8}&\zeta_{15}^2+\zeta_{15}^{13}&\zeta_3+\zeta_3^{2}&\pm(\zeta_{60}^{17},\zeta_{60}^{11})&\pm(\zeta_{30}^{7},\zeta_{30}^{27})\\
\hline
\zeta_{10}+\zeta_{10}^{9}&\zeta_{15}^2+\zeta_{15}^{13}&\zeta_{15}^7+\zeta_{15}^{8}&\zeta_3+\zeta_3^{2}&\pm(\zeta_{60}^{7},\zeta_{60}^{59})&\pm(\zeta_{30}^{12},\zeta_{30}^{2})\\
\hline
\zeta_{10}+\zeta_{10}^{9}&\zeta_3+\zeta_3^{2}&\zeta_{15}^7+\zeta_{15}^{8}&\zeta_{15}^2+\zeta_{15}^{13}&\pm(\zeta_{60}^{13},\zeta_{60}^{53})&\pm(\zeta_{30}^{9},\zeta_{30}^{5})\\
\hline
\hline
\zeta_{7}+\zeta_7^6&\zeta_{7}^2+\zeta_{7}^{5}&\zeta_{7}^{3}+\zeta_{7}^{4}&\zeta_6+\zeta_6^{5}&\pm(\zeta_{42}^{9},\zeta_{42}^{39})&\pm(\zeta_{84}^{25},\zeta_{84}^{11})\\
\hline
\zeta_{7}+\zeta_7^6&\zeta_{7}^{3}+\zeta_{7}^{4}&\zeta_{7}^2+\zeta_{7}^{5}&\zeta_6+\zeta_6^{5}&\pm(\zeta_{42}^{12},\zeta_{42}^{36})&\pm(\zeta_{84}^{19},\zeta_{84}^{5})\\
\hline
\zeta_{7}+\zeta_7^6&\zeta_6+\zeta_6^{5}&\zeta_{7}^2+\zeta_{7}^{5}&\zeta_{7}^{3}+\zeta_{7}^{4}&\pm(\zeta_{84}^{13},\zeta_{84}^{83})&\pm(\zeta_{42}^{15},\zeta_{42}^{39})\\
\hline
\hline
\zeta_{14}+\zeta_{14}^{13}&\zeta_{14}^3+\zeta_{14}^{11}&\zeta_{14}^{5}+\zeta_{14}^{9}&\zeta_3+\zeta_3^{2}&\pm(\zeta_{42}^{6},\zeta_{42}^{39})&\pm(\zeta_{84}^{29},\zeta_{84})\\
\hline
\zeta_{14}+\zeta_{14}^{13}&\zeta_{14}^{5}+\zeta_{14}^{9}&\zeta_{14}^3+\zeta_{14}^{11}&\zeta_3+\zeta_3^{2}&\pm(\zeta_{42}^{9},\zeta_{42}^{36})&\pm(\zeta_{84}^{23},\zeta_{84}^{79})\\
\hline
\zeta_{14}+\zeta_{14}^{13}&\zeta_3+\zeta_3^{2}&\zeta_{14}^3+\zeta_{14}^{11}&\zeta_{14}^{5}+\zeta_{14}^{9}&\pm(\zeta_{84}^{17},\zeta_{84}^{73})&\pm(\zeta_{42}^{12},\zeta_{42}^{39})\\
\hline
\end{array}
$$
\qed
\end{lemma}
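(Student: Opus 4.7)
The plan is to run the reduction sketched in the paragraphs preceding the lemma in a bookkeeping-careful way, and to read off the three parametric families and the ten sporadic entries directly from the classification of Lemma 4.1. The starting point is the observation that if $(\omega_1,\omega_2,\tau_1,\tau_2)$ is a cyclotomic zero of $f$, then pairing complex conjugates in each of the four parenthesized groups rewrites $f(\omega_1,\omega_2,\tau_1,\tau_2)=0$ as an equation of the form $2\cos(\pi A)+2\cos(\pi B)+2\cos(\pi C)+2\cos(\pi D)=0$ with $A,B,C,D\in\Q$. So the first step is simply to enumerate, via Lemma 4.1, all such quadruples $(A,B,C,D)$.

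Next I would fix a cyclotomic solution and match the four conjugate pairs $\{\omega_1\tau_2,\omega_1^{-1}\tau_2^{-1}\}$, $\{\omega_1\tau_2^{-1},\omega_1^{-1}\tau_2\}$, $\{\omega_2\tau_1,\omega_2^{-1}\tau_1^{-1}\}$, $\{\omega_2\tau_1^{-1},\omega_2^{-1}\tau_1\}$ with the four cosines via a permutation $\pi\in S_4$ and a sign function $\sign\colon\{1,2,3,4\}\to\{\pm1\}$, exactly as in the discussion preceding the lemma. Then the symmetries (1)--(3) of $f$ act transitively on the $24$ permutations in groups of $8$, cutting the list of $\pi$ to consider down to three; likewise (4)--(7) absorb arbitrary changes of $\sign$. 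So for each rational solution of $F$ from Lemma 4.1 I only need to process three choices of $\pi$, with $\sign$ fixed arbitrarily.

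With $\pi$ and $\sign$ fixed, each of the four pairings specifies the value of a monomial $\omega_1^{\pm 1}\tau_2^{\pm 1}$ or $\omega_2^{\pm 1}\tau_1^{\pm 1}$; multiplying the first two gives $\omega_1^2$ or $\tau_2^2$ as an explicit root of unity, and multiplying the last two does the same for $\omega_2^2$ and $\tau_1^2$. Taking square roots then determines $(\omega_1,\tau_2)$ and $(\omega_2,\tau_1)$ up to the overall $\pm$ signs indicated in the statement, which is exactly the form in which solutions are presented. Running this procedure on the two parametric families of Lemma 4.1 and simplifying the arithmetic of the resulting exponents (using $\zeta_m^k\zeta_n^\ell=\zeta_{mn}^{kn+\ell m}$ to collect terms) yields the three parametric families in the statement; the family $\{\alpha,\beta,1-\alpha,1-\beta\}$ produces (1) and (2) of the second bullet, and the family $\{\alpha,\tfrac{2}{3}-\alpha,\tfrac{2}{3}+\alpha,\tfrac{1}{2}\}$ produces the first bullet (with the choice of $\pm$ and of which variable absorbs the free root of unity $\zeta_b^a$ distinguishing subcases 1--3). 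Finally, running the same procedure on the ten sporadic solutions of Lemma 4.1, with the three permutation-representatives and an arbitrary sign choice, produces the thirty table entries (three per sporadic solution pair, with the pairs related by negation collapsing onto the same row).

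The conceptual content is essentially contained in Lemma 4.1 and in the symmetry reduction already described in the text; what remains is purely mechanical. The main obstacle I expect is the bookkeeping: one must be scrupulous in checking that the chosen three representatives for $\pi$ really generate all $24$ permutations under (1)--(3) and that operations (4)--(7) suffice to realize every sign function, since otherwise one risks losing entries. A secondary nuisance is simplifying the explicit exponents in the sporadic rows, where common denominators $40,60,84$, etc., appear; this is where most of the arithmetic lives, but it is routine once the framework is fixed.
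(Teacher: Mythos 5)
Your proposal follows exactly the paper's route: the paper declares the lemma ``immediate given Lemma 4.1 and the previous discussion,'' and your plan --- reduce a cyclotomic zero of $f$ to a rational solution of the four-cosine equation, invoke the W{\l}odarski/Conway--Jones classification (Lemma 4.1), match the four conjugate pairs by a permutation and sign function, cut down to three permutation representatives and arbitrary signs via the seven symmetries, then recover $\omega_1^2,\omega_2^2$ (and hence the variables up to $\pm$) by multiplying pairs of equations --- is precisely that discussion made explicit. The residual work you flag (verifying the three representatives generate all $24$ permutations under symmetries 1--3, that 4--7 absorb every sign choice, and the exponent arithmetic for the parametric and sporadic cases) is exactly the mechanical verification the paper leaves implicit, so the approaches coincide.
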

We can now refine the previous lemma by taking into consideration Galois groups of real cyclotomic extensions.
\begin{lemma}
Take two roots of unity, written as $\zeta_{2N}^a$ and $\zeta_{2N}^b$. Then there exists a nontrivial automorphism $\sigma\in \Gal({\Q[\zeta_{2N}]}/\Q)$ for which 
$$
\frac{\zeta_{2N}^a+\zeta_{2N}^{-a}}{\zeta_{2N}^b+\zeta_{2N}^{-b}}=
\frac{\sigma(\zeta_{2N}^a)+\sigma(\zeta_{2N}^{-a})}{\sigma(\zeta_{2N}^b)+\sigma(\zeta_{2N}^{-b})}
$$
if and only if either 
\begin{itemize}
\item $\zeta_{2N}^a= \pm \zeta_{2N}^{\pm b},$ or \\
\item there exists a $k\in (\mathbb{Z}/2N\mathbb{Z})^\times$ which solves the simultaneous equations
\begin{align*}
ak&\equiv N+a\mod 2N\\
bk&\equiv N+b\mod 2N
\end{align*}
\end{itemize}
\end{lemma}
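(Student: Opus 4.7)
The plan is to translate the condition of Lemma 4.3 into a cyclotomic-zero problem for the polynomial $f$ and then apply the classification of Lemma 4.2. Every $\sigma\in\Gal(\Q[\z{2N}]/\Q)$ has the form $\sigma_k:\z{2N}\mapsto\z{2N}^k$ for a unique $k\in(\Z/2N\Z)^\times$, and $\sigma_k$ is nontrivial iff $k\neq 1$. Setting $u_1=\z{2N}^a$ and $u_2=\z{2N}^b$, the ratio-fixing equation rearranges to
\[
(u_1+u_1^{-1})(u_2^k+u_2^{-k})=(u_2+u_2^{-1})(u_1^k+u_1^{-k}),
\]
which---up to the sign conventions absorbed into the seven symmetries of $f$---is exactly the condition $f(u_1,u_2,u_1^k,u_2^k)=0$. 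Thus finding a nontrivial ratio-fixing $\sigma$ amounts to matching $(u_1,u_2,u_1^k,u_2^k)$ with some family from Lemma 4.2 for some $k\neq 1$.

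For the ``$\Leftarrow$'' direction I would verify each case directly. If $u_1=\pm u_2^{\pm 1}$ then $u_1+u_1^{-1}=\pm(u_2+u_2^{-1})$ and the ratio equals $\pm 1\in\Q$, hence is fixed by every $\sigma$ (the degenerate subcase $u_2+u_2^{-1}=0$ is excluded). For the second case, the two congruences are equivalent, via $\z{2N}^N=-1$, to $u_i^k=-u_i$ for $i=1,2$; hence $\sigma_k$ negates both $u_i+u_i^{-1}$, and the minus signs cancel in the ratio. Since $k=1$ would force $N\equiv 0\pmod{2N}$, every such $k$ gives a nontrivial $\sigma$.

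For the ``$\Rightarrow$'' direction I would impose the extra constraint $y_i=x_i^k$ on each family of Lemma 4.2. The two-parameter families of the second list collapse immediately: items $2$ and $3$ have $x_1x_2=\pm 1$ (since the $\z{2bd}$-exponents of $x_1$ and $x_2$ sum to zero), and item $1$ has $y_1=\pm y_2$, so $(u_1u_2^{-1})^k=\pm 1$. Because $\gcd(k,2N)=1$, each of these reduces to $u_1=\pm u_2^{\pm 1}$, the first case. The one-parameter families of the first list each feature a fixed low-order factor ($\z{3}$, $\z{8}$, or $\z{24}$), which pins $y_1$ or $y_2$ to a root of unity of order dividing $24$; substituting $y_i=x_i^k$ then forces $x_i^{k}$ into this low-order subgroup, from which elementary modular arithmetic on the exponents yields either $u_1=\pm u_2^{\pm 1}$ or $\sigma_k(u_i)=-u_i$. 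Finally, the sporadic rows are quadruples of bounded conductor, and a direct check rules out any $k$ simultaneously satisfying $y_1=x_1^k$ and $y_2=x_2^k$.

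The main obstacle will be the bookkeeping across the seven symmetries of $f$, each of which permutes the roles of $u_i^{\pm 1}$ and $u_i^{\pm k}$; the constraint $y_i=x_i^k$ must be traced through every symmetric incarnation. The subtlest branch is the first-list families, where the interplay between a fixed cyclotomic factor and the specialization $y_i=x_i^k$ demands careful modular arithmetic in the exponents before either of the advertised cases can be extracted.
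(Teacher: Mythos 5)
Your framing matches the paper's: translate the fixed‑ratio condition into a cyclotomic zero of $f$, apply Lemma~4.2's classification, and then decide which solutions can come from a genuine $\sigma_k$. Your ``$\Leftarrow$'' direction is a clean direct verification, and your treatment of the two‑parameter families (items 2, 3 give $x_1x_2=\pm 1$ at sight; item 1 gives $y_1=\pm y_2$ and hence, using $\gcd(k,2N)=1$, $u_1=\pm u_2^{\pm 1}$) is correct.

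Where you diverge from the paper is in the engine used for the ``$\Rightarrow$'' direction. The paper does not impose the full constraint $y_i=x_i^k$ at the outset; instead it observes that any field automorphism $\sigma$ must preserve multiplicative order, so $\mathrm{ord}(x_i)=\mathrm{ord}(y_i)$. That single observation kills every sporadic row immediately (there the $x_i$ and the corresponding $y_i$ are built from roots of unity of visibly different orders, e.g.\ a $40$th root paired with a $60$th root), and for the one‑parameter families it pins the parameter $\zeta_b^a$ to a short finite list, after which one checks that each surviving tuple has $y_i=\pm x_i^{\pm 1}$. Your proposal of running the stronger constraint $y_i=x_i^k$ through ``elementary modular arithmetic'' will reach the same endpoint, but it is genuinely more laborious, and — as you say yourself — the bookkeeping across the seven symmetries and the one‑parameter families is exactly the part you have not done. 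So the proposal is not wrong, but its main case analysis is asserted rather than carried out; the order‑preservation observation is the missing idea that would let you discharge most of that bookkeeping at a glance.

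One further small point worth flagging: when you translate $y_i=\pm x_i^{\pm 1}$ into congruences you should be careful that ``nontrivial'' here must mean nontrivial on the totally real subfield (equivalently $k\not\equiv\pm 1$), since complex conjugation fixes every real ratio $\frac{u_1+u_1^{-1}}{u_2+u_2^{-1}}$; it is precisely this that forces the minus sign, i.e.\ $\sigma_k(u_i)=-u_i^{\pm 1}$ rather than $\sigma_k(u_i)=u_i^{\pm 1}$, and hence the $N+a$, $N+b$ form of the stated congruences.
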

Before giving the proof which, at this stage, follows quite simply from our buildup, we provide an example application of our theorem. Let us take $2N=20$, $a=1$, and $b=3$. Then 
\begin{align*}
k&\equiv 10+1 \mod 20\\
3k&\equiv 10+3 \mod 20
\end{align*}
has the solution $k=11$, telling us that $\frac{\zeta_{20}+\zeta_{20}^{19}}{\zeta_{20}^3+\zeta_{20}^{17}}$ fails to generate $\Q[\zeta_{20}+\zeta_{20}^{-1}]$. However, the ratio $\frac{\zeta_{20}+\zeta_{20}^{19}}{\zeta_{20}^2+\zeta_{20}^{18}}$ does generate the extension, since if $k$ is forced to be $11$ then $2k\not\equiv 12\mod 20$. In this simple case one could verify these observations by, for instance, computing the minimal polynomials of the previous two ratios. However, one could imagine that if $N$ grows it will become much simpler to use the criterion given by Lemma 4.3.

We now proceed with the proof of Lemma 4.3.
\begin{proof}[Proof of Lemma 4.3]
Suppose $\sigma$ is a map as above. As a field automorphism, $\sigma$ must respect multiplicative order when applied to any root of unity. By examining Lemma 4.2 we see that the only case in which this occurs is if $x_1=\pm x_2^{\pm1}$, in which case $\sigma$ can be arbitrary, or if we can simultaneously solve $y_1=\pm x_1^{\pm 1}$ and $y_2=\pm x_2^{\pm1}$. Thus we must find an automorphism $\sigma$ for which $\sigma(\zeta_{2N}^a)=\pm \zeta_{2N}^{\pm a}$ and $\sigma(\zeta_{2N}^b)=\pm \zeta_{2N}^{\pm b}$.
Now automorphisms of cyclotomic extensions will have the form $\zeta_{2N}\to \zeta_{2N}^k$ for $k$ coprime to $2N$. In order for the automorphism to be nontrivial, we must be able to solve the following consistently for $k$:
\begin{align*}
ka&\equiv N+a \mod 2N\\
kb&\equiv N+b \mod 2N.
\end{align*}
This completes the proof.
\end{proof}
Having classified the cyclotomic solutions to our Laurent polynomial of interest and then examined the relationship between these solutions and Galois groups of real cyclotomic fields, it is a simple matter to classify defective ratios of polygon diagonals. 
\begin{proposition}
Let $d_1$ and $d_2$ be diagonals of a regular $N$ gon, with $d_1=|1-\zeta_N^a|$ and $d_2=|1-\zeta_N^b|$. Then the ratio $\tfrac{d_1}{d_2}$ is defective if and only if either of the following hold:
\begin{itemize}
\item $d_1=d_2$,\text{ or }
\item there exists $k\in (\Z/4N\Z)^\times$ which solves the simultaneous equation 
\begin{align*}
k(N-2a)&\equiv 3N-2a \mod 4N\\
k(N-2b)&\equiv 3N-2b \mod 4N.
\end{align*}
\end{itemize}
\end{proposition}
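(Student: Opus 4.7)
The plan is to derive Proposition 4.4 as a direct specialization of Lemma 4.3 via the identity
$$d_i = \zeta_{4N}^{N-2a_i} + \zeta_{4N}^{-(N-2a_i)}$$
(with $a_1 = a$, $a_2 = b$) established at the start of Section 4. First I would observe that $\mathbb{Q}[d_1,d_2] = \mathbb{Q}[\zeta_{4N}]^+$ is Galois over $\mathbb{Q}$, so $d_1/d_2$ is defective iff some nontrivial $\sigma \in \Gal(\mathbb{Q}[\zeta_{4N}]^+/\mathbb{Q})$ fixes it. Lifting $\sigma$ to $\Gal(\mathbb{Q}[\zeta_{4N}]/\mathbb{Q})$, this is the same as asking for an automorphism $\hat\sigma$ that fixes $d_1/d_2$ yet is neither the identity nor complex conjugation (these being the two lifts of the identity on the real subfield).

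Next I would apply Lemma 4.3, replacing its ``$2N$'' by $4N$ and its two exponents by $N-2a$ and $N-2b$. The lemma returns two alternatives: either
$$\zeta_{4N}^{N-2a} = \pm\, \zeta_{4N}^{\pm(N-2b)} \qquad (\text{A})$$
or there exists $k \in (\mathbb{Z}/4N\mathbb{Z})^\times$ with
$$k(N-2a) \equiv 3N - 2a, \qquad k(N-2b) \equiv 3N - 2b \pmod{4N},$$
which is the second bullet of the proposition verbatim (using $2N + (N-2a) = 3N - 2a$).

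The bulk of the remaining work is to show (A) is equivalent to $d_1 = d_2$. I would case-split on the four sign choices, which reduce modulo $4N$ to $a \equiv b$, $a + b \equiv N$, $b - a \equiv N$, or $a + b \equiv 0 \pmod{2N}$. Under the standing convention $1 \le a, b \le N - 1$, the last two are impossible and the first two collapse to $a \equiv \pm b \pmod{N}$. Since $|1 - \zeta_N^a| = |1 - \zeta_N^b|$ unpacks via $|1-z|^2 = 2 - (z + \bar z)$ to exactly $a \equiv \pm b \pmod{N}$, this produces the equivalence in both directions.

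Finally I would verify that any $k$ produced by the second bullet automatically satisfies $k \not\equiv \pm 1 \pmod{4N}$: substituting $k=1$ forces $2N \equiv 0 \pmod{4N}$, and $k = -1$ forces $4a \equiv 0 \pmod{4N}$, both contradicting $1 \le a \le N-1$. This guarantees that the lifted $\hat\sigma$ is genuinely nontrivial on $\mathbb{Q}[\zeta_{4N}]^+$, closing the circle. I expect the main obstacle to be bookkeeping in the case analysis of (A): one must track every $\pm$ option carefully and check it collapses to the single congruence $a \equiv \pm b \pmod{N}$, so that no defective ratios are missed or spuriously introduced.
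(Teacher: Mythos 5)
Your argument is correct and is, at bottom, the same approach the paper takes: encode each $d_i$ as $\zeta_{4N}^{N-2a_i}+\zeta_{4N}^{-(N-2a_i)}$, pass from a nontrivial automorphism of $\Q[\zeta_{4N}]^+$ fixing the ratio to a lift $\hat\sigma$ on $\Q[\zeta_{4N}]$, and then feed the resulting cyclotomic relation through Lemma 4.2/4.3 to land on the $k$-congruence. The organizational difference is that you invoke Lemma 4.3 as a black box after the substitutions $2N\mapsto 4N$, $(a,b)\mapsto(N-2a,N-2b)$, whereas the paper re-derives the key dichotomy directly from Lemma 4.2 and only cites Lemma 4.3 at the end.

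Two places where your write-up is actually \emph{more} careful than the paper's proof, and are worth keeping: first, the paper only argues the forward direction (defective $\Rightarrow$ one of the two bullets) and leaves the converse implicit, while you close the loop by checking that any $k$ from the second bullet must satisfy $k\not\equiv\pm1\pmod{4N}$, so the associated $\hat\sigma$ really does descend to a nontrivial automorphism of $\Q[\zeta_{4N}]^+$ (a priori Lemma 4.3 only hands you a nontrivial automorphism of $\Q[\zeta_{4N}]$, and the kernel $\{\mathrm{id},c\}$ of the restriction map has to be ruled out). Second, you spell out why case (A) of Lemma 4.3 is exactly the condition $d_1=d_2$ via the four sign choices reducing to $a=b$ or $a+b=N$ under $1\le a,b\le N-1$; the paper simply splits off the case $d_1\neq d_2$ without comment. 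Your case analysis of (A) checks out (the congruences are $a\equiv b$, $a+b\equiv N$, $b-a\equiv N$, $a+b\equiv 0$ all mod $2N$, of which the last two are vacuous on the allowed range). In short: same method, cleaner bookkeeping, and you fill in the converse direction that the paper glosses over.
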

\begin{proof}
As discussed in the beginning of the section, if $\tfrac{d_1}{d_2}$ fails to generate the extension in question then we could find a nontrivial field automorphism $\sigma$ of $\mathbb{Q}[d_1,d_2]$ fixing $\tfrac{d_1}{d_2}$. Writing 
$$
\frac{d_1}{d_2}=
\frac{\zeta_{4N}^{N-2a}+\zeta_{4N}^{3N+2a}}{\zeta_{4N}^{N-2b}+\zeta_{4N}^{3N+2b}},
$$
this would give a solution 
$$
f(\zeta_{4N}^{N-2a},\zeta_{4N}^{N-2b},\sigma(\zeta_{4N}^{N-2a}),\sigma(\zeta_{4N}^{N-2b})=0.
$$
If $d_1\neq d_2$ then Lemma 4.2 implies that $\sigma(\zeta_{4N}^{N-2a})$ is in $\{\pm\zeta_{4N}^{N-2a},\pm\zeta_{4N}^{2a-N}\}$ and that $\sigma(\zeta_{4N}^{N-2b})$ is in $\{\pm\zeta_{4N}^{N-2b},\pm\zeta_{4N}^{2b-N}\}$. As we are working in $\Q[\zeta_{4N}]^+$, we can assume with no loss of generality that $\sigma(\zeta_{4N}^{N-2a})=-\zeta_{4N}^{N-2a}$ and $\sigma(\zeta_{4N}^{N-2b})=-\zeta_{4N}^{N-2b}$. This then implies that there exists a $k\in (\mathbb{Z}/4N\mathbb{Z})^\times$ for which $k(N-2a)\equiv 3N-2a\mod 4N$ and $k(N-2b)\equiv 3N-2b\mod 4N$ as in Lemma 4.3.
\end{proof}
Again by way of example, the golden ratio $\phi_1=\frac{|1-\zeta_5^2|}{|1-\zeta_5|}$ is defective; this reduces to the example following Lemma 4.2, since 
$$
\phi_1=\frac{|1-\zeta_5^2|}{|1-\zeta_5|}=\frac{\zeta_{20}+\zeta_{20}^{19}}{\zeta_{20}^3+\zeta_{20}^{17}}=\frac{\zeta_{20}^{11}+\zeta_{20}^9}{\zeta_{20}^{13}+\zeta_{20}^7}.
$$

An example of a non-defective ratio can also be found by taking, for instance, a ratio of two diagonals in a decagon. For example,
$$
\frac{|1-\zeta_{10}|}{|1-\zeta_{10}^2|}=\frac{\zeta_{40}^8+\zeta_{40}^{32}}{\zeta_{40}^6+\zeta_{40}^6};
$$
since $\zeta_{40}^8$ reduces to a fifth root of unity it cannot be conjugated to its negative, which has multiplicative order $10$. Hence this ratio is not defective. 

We end the section by using Proposition 4.4 to prove Theorem 1.2, and then using Theorem 1.2 to give an alternate proof to Theorem 1.1.
\begin{proof}[Proof of Theorem 1.2]
We can compute the index $[\Q[d_1,d_2]:\Q[\tfrac{d_1}{d_2}]]$ by computing the size of the associated Galois group $\Gal(\Q[d_1,d_2]/\Q[\tfrac{d_1}{d_2}])$. As the second field is monogenic and our extensions are abelian, we may compute this by counting the number of automorphisms $\sigma\in \Gal(\Q[d_1,d_2]/\Q)$ which fix $\tfrac{d_1}{d_2}$. Proposition 4.4 tells us that the number of nontrivial automorphisms which fix $\tfrac{d_1}{d_2}$
equals the one half the number of solutions to the simultaneous equation
\begin{align*}
k(N-2a)&\equiv 3N-2a \mod 4N\\
k(N-2b)&\equiv 3N-2b \mod 4N.
\end{align*}
We must divide by 2 to account for the passing from $\Gal(\Q[\zeta_{4N}]/\Q)$ to $\Gal(\Q[\zeta_{4N}^+]/\Q)$.

Reducing these equation mod $N$ and rearranging shows that such a $k$ must also solve 
\begin{align*}
2(k-1)a &\equiv 0 \mod N\\
2(k-1)b &\equiv 0 \mod N.
\end{align*}
Since $a$ and $b$ are assumed to be coprime, this can only occur if $2(k-1)\equiv 0\mod N$. If $N$ is odd then this equation has the unique solution $k\equiv 1 \mod N$, and hence there are at most 4 solutions to the original solution, corresponding to the residue classes $1$, $N+1$, $2N+1$, and $3N+1 \mod 4N$. A simple parity argument shows $k$ must be odd, and hence there are in fact $2$ solutions in this case. 

Alternatively take $N$ to be even; in this case there are fewer restrictions. The equation $2(k-1)\equiv 0\mod N$ implies $k\equiv 1\mod N$ or $k\equiv N/2+1\mod N$, which lift to at most $8$ solutions to the original equation.

Thus when $N$ is odd there is a unique nontrivial map fixing $\tfrac{d_1}{d_2}$, and when $N$ is even there are at most four such maps. After accounting for the identity morphism of the Galois group we obtain
$$
[\Q[d_1,d_2]:\Q[\tfrac{d_1}{d_2}]] \geq \begin{cases}2\text{ if }N\text{ is odd,}\\5\text{ if }N\text{ is even.}\end{cases}
$$
Using multiplicativity of degree in towers of field extensions then gives Theorem 1.2. 

\end{proof}

We end by showing how Theorem 1.2 gives an alternate proof to Theorem 1.1. 
\begin{proof}[Proof of Theorem 1.1]
The proof of Theorem 1.2 implies that if $d_1$ and $d_2$  are diagonals of a regular $N$-gon, then the ratio $\tfrac{d_1}{d_2}$ generates an extension of $\Q$ of degree at least $\phi(4N)/16$. We employ a crude bound of $\phi(n)\geq 48\cdot(n/210)^{12/13}$ for any integer $n$; for reference, see the Math StackExchange answer \cite{MSE}. In particular, if $\tfrac{d_1}{d_2}$ generates an extension of degree $\leq 4$, then $\phi(4N)\leq 64$, and hence $N\leq 286$. 

To prove Theorem 1.1, all one has to do is iterate through the values $3\leq N\leq 286$. For each such $N$ one computes all possible pairs of coprime integers $1\leq a,b\leq 286$, and for each pair computes the minimal polynomial of
$$
\frac{\zeta_{4N}^{N-2a}+\zeta_{4N}^{3N+2a}}{\zeta_{4N}^{N-2b}+\zeta_{4N}^{3N+2b}}
$$
using a computer algebra system such as Sage \cite{sagemath}. It is then a simple matter to classify which such minimal polynomials give rise to metallic ratios.
\end{proof}

We end our paper by noting that there has recently been much interest in examining vanishing sums of roots of unity. Apart from the work of W{\l}odarski and of Conway and Jones mentioned in this section, one could explore \cite{CMS11}, \cite{LL00}, \cite{PR98}, or \cite{Ste08}. It is feasible to imagine that the methods of this section, combined with these results, could lead to many more answers regarding generators of (subfields of) cyclotomic fields. We leave this area of exploration to the interested reader.

For an alternate avenue of study, one could examine analytical questions regarding polygon diagonals, instead of the algebraic ones examined here. For instance, we propose the following question.
\begin{question}
Let $D$ denote the set of all ratios of polygon diagonals. What are the limit points of $D$?
\end{question}
\noindent For questions of a similar nature regarding Salem or Pisot numbers, see for instance \cite{Smy15} or Chapter 6 of \cite{MR1187044}.
\bibliography{cyc_ref}{}
\bibliographystyle{alpha}

\end{document}